\numberwithin{equation}{section}
\newtheorem{theorem}{Theorem}[section]
\newtheorem{defi}[theorem]{Definition}
\newtheorem{proposition}[theorem]{Proposition}
\newtheorem{lemma}[theorem]{Lemma}
\newtheorem{cor}[theorem]{Corollary}
\newtheorem{rem}[theorem]{Remark}
\newtheorem{ex}[theorem]{Example\/}
\newcommand{\R}{\mathbb{R}}
\numberwithin{equation}{section}
\begin{document}

\title[Anisotropic elliptic equations with $L^1$ data]{Anisotropic elliptic equations with gradient-dependent lower order terms and $L^1$ data}

\author{Barbara Brandolini}
\address[Barbara Brandolini]{Dipartimento di Matematica e Informatica, Universit\`a degli Studi di Palermo, via Archirafi 34, 90123 Palermo, Italy {\em (current address)}; 
Dipartimento di Matematica e Applicazioni ``R. Caccioppoli'', Universit\`a degli Studi di Napoli Federico II, Complesso Monte S. Angelo - via Cintia, 80126 Napoli, Italy}
\email{barbara.brandolini@unipa.it}

\author{Florica C. C\^{\i}rstea}
\address[Florica C. C\^irstea]{School of Mathematics and Statistics, The University of Sydney,
	NSW 2006, Australia}
\email{florica.cirstea@sydney.edu.au}

\date{}
\keywords{Nonlinear anisotropic elliptic equations; Leray--Lions operators; summable data}
\subjclass[2010]{35J25, 35B45, 35J60}

%\dedicatory{Dedicated to Professor Neil S. Trudinger on the occasion of his 80th birthday}

\begin{abstract}
We prove the existence of a  weak solution for a general class of Dirichlet anisotropic elliptic problems such as $\mathcal Au+\Phi(x,u,\nabla u)=\mathfrak{B}u+f$ in $\Omega$, where $\Omega$ is a bounded open subset of $\mathbb R^N$ and $f\in L^1(\Omega)$ is arbitrary. The principal part is a divergence-form nonlinear anisotropic operator $\mathcal A$, the prototype of which is $\mathcal A u=-\sum_{j=1}^N \partial_j(|\partial_j u|^{p_j-2}\partial_j u)$ with $p_j>1$ for all $1\leq j\leq N$ and $\sum_{j=1}^N (1/p_j)>1$. As a novelty in this paper, 
	our lower order terms involve a new class of operators $\mathfrak B$ such that $\mathcal{A}-\mathfrak{B}$ is bounded, coercive and pseudo-monotone from $W_0^{1,\overrightarrow{p}}(\Omega)$ into its dual, as well as a gradient-dependent nonlinearity  $\Phi$ with an  ``anisotropic natural growth" in the gradient and a good sign condition.
\end{abstract}

\maketitle

\section{INTRODUCTION AND MAIN RESULT}

\subsection{Setting of the problem}
In their famous book \cite{GT}, 
Gilbarg and Trudinger captured  
the astonishing achievements in the theory of nonlinear elliptic second order partial differential equations. For recent developments of fully nonlinear equations and their applications to 
optimal transportation and conformal 
geometry, see e.g., Trudinger \cite{Tr1,Tr2}. 

A quasilinear operator is not always the differential of a functional of the calculus of
variations. What makes it possible to go further than the calculus of variations in
the convex case is the abstract concept of monotone operator and, more generally, of pseudo-monotone
operator. Several papers \cite{BBoc,BBM,BOG,BMP,BGal} deal with nonlinear elliptic problems in a bounded open subset $\Omega$ of $\mathbb R^N$ involving coercive, bounded, continuous and 
pseudo-monotone 
Leray--Lions type operators from $W_0^{1,p}(\Omega)$ into its dual $W^{-1,p'}(\Omega)$, where $1<p<\infty$ and $p'=p/(p-1)$ is the conjugate exponent of $p$. 
The prototype model of such an operator is the $p$-Laplacian $\Delta_p u={\rm div}\, (|\nabla u|^{p-2}\nabla u)$.  
The techniques developed in these papers accommodate for a lower-order term
$g(x,u,\nabla u)$ with a ``natural growth" in the gradient $|\nabla u|$ and without any restriction of its growth in $|u|$. Either $f\in L^1(\Omega)$ or $h\in W^{-1,p'}(\Omega)$ could be included because of the ``sign-condition" on $g$ (that is, $g(x,t,\xi)\,t\geq 0$ for a.e. $x\in \Omega$ and all $(t,\xi)\in \mathbb R\times \mathbb R^N$). For related works, we refer to \cite{AFM,AM,BMMP1,FMe,FM,GMP}.

In this paper, we expand the above research program into the anisotropic arena by providing a suitable general framework under which for {\em every} $f\in L^1(\Omega)$, we prove the existence of a weak solution to Dirichlet anisotropic elliptic problems such as 
\begin{equation} \label{eq101}
\left\{  \begin{aligned} 
&\mathcal A u + \Phi(x,u,\nabla u)+\Theta(x,u,\nabla u)=\mathfrak{B}u+
f\quad \mbox{in } \Omega,\\ 
& u\in W_0^{1,\overrightarrow{p}}(\Omega), \quad \Phi(x,u,\nabla u)\in L^1(\Omega). 
\end{aligned} \right.
\end{equation}   
Here, and henceforth, $\Omega$ is a bounded, open subset of $\mathbb R^N$ ($N\ge 2$). We impose no smoothness condition on $\partial\Omega$. 
We assume throughout that
\begin{equation} \label{IntroEq0}
1<p_j\leq p_{j+1}<\infty\ \mbox{for every }1\leq j\leq N-1 \quad  \text{and}\quad 
p<N,
\end{equation}
where $p=N/\sum_{j=1}^N (1/p_j)$ is the harmonic mean of $p_1,\ldots, p_N$.
Let $\nabla u=(\partial_1 u,\ldots,\partial_N u) $ be the gradient of $u$.  
Let $W_0^{1,\overrightarrow{p}}(\Omega)$ be the closure of   
$C_c^\infty (\Omega)$ (the set of smooth functions with compact support in $\Omega$)   
with respect to the norm 
$ \|u\|_{W_0^{1,\overrightarrow{p}}(\Omega)}=\sum_{j=1}^N \|\partial_j u\|_{L^{p_j}(\Omega)}$.    
We use $W^{-1,\overrightarrow{p}'}(\Omega)$ to denote the dual of $W_0^{1,\overrightarrow{p}}(\Omega)$ and $\langle \cdot, \cdot \rangle$ for 
the duality between $W^{-1,\overrightarrow{p}'}(\Omega)$ and 
$W_0^{1,\overrightarrow{p}}(\Omega)$.
The prototype for $\mathcal A$ is the anisotropic $\overrightarrow{p}$-Laplacian, namely, 
\begin{equation}  \label{lapal}
\mathcal A u=-\sum_{j=1}^N \partial_j(|\partial_j u|^{p_j-2}\partial_j u),
\end{equation}
%where we assume that
 (see \eqref{form} and \eqref{ellip}). 
The model for $\Phi$ in \eqref{eq101} is as follows
\begin{equation}  
\label{exa} 
\Phi(u,\nabla u)= \left(\sum_{j=1}^N |\partial_j u|^{p_j}+1\right)|u|^{m-2}u+\sum_{j=1}^N \mathfrak{b}_j |\partial_j u|^{q_j}\,|u|^{\theta_j-2}u , 
\end{equation}
where $\mathfrak{b}_j\geq 0$ and $0\leq q_j<p_j$, while $m,\theta_j>1$ for all $1\leq j\leq N$ are arbitrary (see \eqref{cond1} and \eqref{info}).  
We assume throughout that 
$\Theta(x,t,\xi): \Omega\times \mathbb R\times \mathbb R^N\to  \mathbb R$ is a Carath\' eodory function 
(that is, measurable on $\Omega$ for every $(t,\xi)\in  \mathbb R\times \mathbb R^N$ and continuous in 
$t,\xi$ for a.e. $x\in \Omega$) and there exists a constant $C_\Theta>0$ such that
\begin{equation} \label{newlab0}
|\Theta(x,t,\xi)|\leq C_\Theta \quad \mbox{
	for a.e. } x\in \Omega\ \mbox{and for all } (t,\xi)\in  \mathbb R\times \mathbb R^N.
\end{equation} 
Furthermore, our problem \eqref{eq101} features a new class of operators $\mathfrak{B}$ as follows.

\begin{defi} \label{doi}
	{\rm Let \eqref{form} and \eqref{ellip} hold. By $\mathfrak{BC}$ we denote the class of all {\em bounded} operators
		$\mathfrak B$ from $W_0^{1,\overrightarrow{p}}(\Omega)$ into  $W^{-1,\overrightarrow{p}'}(\Omega)$ satisfying the following two properties: 
		
		$(P_1)$ The operator $\mathcal A-\mathfrak{B} $ from $W_0^{1,\overrightarrow{p}}(\Omega)$ into  $W^{-1,\overrightarrow{p}'}(\Omega)$ is {\em coercive} (see Definition~\ref{nsr}). 
		 		
		$(P_2)$ If $u_\ell  \rightharpoonup u$ and $v_\ell  \rightharpoonup v$ (weakly) in $W_0^{1,\overrightarrow{p}}(\Omega)$ as $\ell\to \infty$, then
		\begin{equation*} \label{pp2} 
		\lim_{\ell\to \infty} 	\langle \mathfrak{B} u_\ell,v_\ell\rangle= \langle \mathfrak{B} u, v \rangle.
		\end{equation*}}
	Let $\mathfrak{BC}_+$ be the class of operators in $\mathfrak{BC}$ satisfying the extra condition

	$(P_3)$ For $\nu_0>0$ in the coercivity condition of \eqref{ellip} and each $k>0$, it holds 
	\begin{equation} \nu_0 \sum_{j=1}^N \|\partial_j u\|_{L^{p_j}(\Omega)}^{p_j}-\langle \mathfrak{B}u,T_k u \rangle
	\to \infty \ \mbox{as }	 \|u\|_{W_0^{1,\overrightarrow{p}}(\Omega)}\to \infty.	
	\end{equation}
\end{defi}
\noindent We use $T_k$ for the truncation at height $k$, see \eqref{trunc}. 
Unlike $\mathcal A$, the operator $-\mathfrak{B}$ is not coercive in general. 
Our assumption $(P_2)$ is reminiscent of $(iii)$ in the Hypothesis $(II)$ of Theorem~1 in the celebrated paper \cite{LL} by Leray and Lions. 
Every operator satisfying $(P_2)$ is strongly continuous (see Lemma~\ref{strong0}) and pseudo-monotone 
(cf. \cite[p. 586]{ze}).

In Example~\ref{exg} we use that $p^\ast=Np/(N-p)$ is the critical exponent for the embedding $W_0^{1,\overrightarrow{p}}(\Omega) \hookrightarrow L^r(\Omega)$ (see Remark~\ref{an-sob} in the Appendix). For any $r>1$, let $r'=r/(r-1)$. %We give examples of operators $\mathfrak{B}$ in the class $\mathfrak{BC}_+$.} 

%We conclude this subsection by {\color{blue} giving examples} of operators $\mathfrak{B}$ {\color{blue} in the class $\mathfrak{BC}_+$ introduced in} Definition \ref{doi}.
\begin{ex} \label{exg}
	{\rm Let $F\in L^{(p^\ast)'}(\Omega)$ and $h,\widetilde h\in W^{-1,\overrightarrow{p}'}(\Omega)$ be arbitrary. 
		Let $\rho,\alpha_k\in \mathbb R$ for $0\leq k\leq 4$. 
		For every $u\in W_0^{1,\overrightarrow{p}}(\Omega)$,  we define 
		
		(1) $\mathfrak{B} u =h$;
		
		(2) $\mathfrak{B} u = F+\rho\,|u|^{\vartheta-2}u$ with $1 < \vartheta <p$ if $\rho>0$ and $1<\vartheta<p^\ast$ if $\rho<0$;
		
		(3) $\mathfrak{B} u =\left(\alpha_0+\alpha_1 \|u\|^{\mathfrak{b}_1}_{L^r(\Omega)} +\alpha_2
		|\langle \widetilde h,u\rangle|^{\mathfrak{b_2}} \right) \left(
		\alpha_3 h+ \alpha_4  F  \right)$,
		where $ r\in [1,p^\ast)$; we take $ \mathfrak{b}_1\in (0,p/p_1')$ and $ \mathfrak{b}_2\in (0,p_1-1)$  if  $\alpha_3\not=0$; $ \mathfrak{b}_1\in (0,p-1)$ and $\mathfrak{b}_2\in (0,p_1/p')$ if $\alpha_3=0$;
		
		(4) $\mathfrak{B} u = -\sum_{j=1}^N \partial_j\left(\beta_j(x)+|u|^{\sigma_j-1}u\right)$, 
		where $ \beta_j\in L^{p_j'}(\Omega)$ and $0<\sigma_j<p/p_j'$ 
		for every $1 \le j \le N$.	
				
				In each of these situations, $\mathfrak{B}$ belongs to the class $\mathfrak{BC}_+$.}
\end{ex}

%The brackets $\langle \cdot,\cdot \rangle$ indicate the duality between $W^{-1,\overrightarrow{p}'}(\Omega)$ and $W_0^{1,\overrightarrow{p}}(\Omega)$. 

%For any $\mathfrak B$ in the class $\mathfrak{BC}$, the operator $\mathcal A-\mathfrak{B}:W_0^{1,\overrightarrow{p}}(\Omega)\to W^{-1,\overrightarrow{p}'}(\Omega)$ is coercive, bounded and pseudo-monotone.  

\subsection{Main result}
%Under the hypotheses \eqref{ellip}--\eqref{info}, we prove in Theorem~\ref{nth2} that \eqref{eq101} has a solution for every $f\in L^1(\Omega)$ and $\mathfrak B$ in the class $\mathfrak{BC}_+$. 
By a {\em solution} of \eqref{eq101} we mean any function $u\in  W_0^{1,\overrightarrow{p}}(\Omega)$ such that $\Phi(x,u,\nabla u)\in L^1(\Omega)$ and, for every
$v\in W_0^{1,\overrightarrow{p}}(\Omega)\cap L^\infty(\Omega)$,  
\begin{equation} \label{sesim}
\langle \mathcal Au,v\rangle +
\int_\Omega \Phi(x,u,\nabla u)\,v\,dx +
\int_\Omega \Theta(x,u,\nabla u)\,v\,dx=\langle \mathfrak{B} u,v\rangle +\int_\Omega f\,v\,dx.
\end{equation}

Under the assumptions in Section~\ref{asb}, the main advance in this paper is the following.

\begin{theorem} \label{nth2} 
Let \eqref{IntroEq0}, \eqref{newlab0}, \eqref{ellip} and \eqref{cond1} hold. 
\begin{enumerate}
\item[(i)] If $f=0$ in \eqref{eq101}, then \eqref{eq101} has a solution $U$ for every $\mathfrak{B}$ in the class $\mathfrak{BC}$. Moreover, 
$\Phi(x,U,\nabla U) \, U\in L^1(\Omega)$ and \eqref{sesim} holds with 
$u=v=U$. 
\item[(ii)] 
If \eqref{info} is satisfied, then   
\eqref{eq101} has at least a solution for every $f\in L^1(\Omega)$ and $\mathfrak{B}$ in the class $\mathfrak{BC}_+$.
\end{enumerate}
\end{theorem}

Theorem~\ref{nth2} is new even when transposed to isotropic Leray--Lions type operators $\mathcal A$ from $W_0^{1,p}(\Omega)$ into $W^{-1,p'}(\Omega)$.  
This is due to the introduction of $\mathfrak{B}$ in \eqref{eq101}, which adds extra difficulties. Were a solution of \eqref{eq101} to exist in $W_0^{1,\overrightarrow{p}}(\Omega)$, then we would expect it to be {\em unbounded}. This was observed by Bensoussan, Boccardo and Murat \cite{BBM} for isotropic nonlinear elliptic equations involving
$h\in W^{-1,p'}(\Omega)$ and Leray--Lions type operators of the $p$-Laplacian type. Hence, the regularizing effect that otherwise $\Phi$ would bring to the solutions in $W_0^{1,\overrightarrow{p}}(\Omega)$ is countered by 
the presence of $\mathfrak B$ in our class $\mathfrak{BC}$.

Note that without the term $\Phi$, one cannot expect to find  solutions of \eqref{eq101} in $W_0^{1,\overrightarrow{p}}(\Omega)$ for every $f\in L^1(\Omega)$. In the isotropic case, this observation was made by Boccardo and Gallou\"{e}t \cite{BGal}. 
A critical role in obtaining the existence of solutions of \eqref{eq101} in 
$W_0^{1,\overrightarrow{p}}(\Omega)$ 
is played by a gradient-dependent lower-order term $\Phi(x,u,\nabla u)$ with an ``anisotropic natural growth" in the gradient and a good sign condition (see \eqref{cond1} and \eqref{info}). 
 
For Theorem~\ref{nth2} (ii) we encounter two obstacles: a low summability for $f$ and, on the other hand, the unrestricted growth of $\Phi$ with respect to $|u|$. 
Previously mentioned works in the isotropic case provide ways to surmount one problem at a time. 
The function  
$f\in L^1(\Omega)$ can surely be approximated by $L^\infty(\Omega)$-functions $f_\varepsilon$ in the sense that $|f_\varepsilon|\leq |f| $ and $f_\varepsilon\to f$ a.e in $\Omega$ as $\varepsilon\to 0$. Also $\Phi$ could be replaced by a ``nice" function $\Phi_\varepsilon$,
preserving the properties of $\Phi$, but gaining boundedness, see \eqref{fina}. 
However, as it was pointed out by Bensoussan and 
Boccardo \cite{BBoc} in the isotropic case, one 
cannot deal with both approximations for $f$ and $\Phi$ {\em simultaneously}. 
For the approximate problems involving both $\Phi_\varepsilon$ and $f_\varepsilon$, we would not be able to obtain that the solutions $u_\varepsilon$ are uniformly bounded in $W_0^{1,\overrightarrow{p}}(\Omega)$ with respect to $\varepsilon$. 
For the above reason, we need to consider $f=0$ first and prove  
Theorem~\ref{nth2} (i), which is a crucial step in establishing the second assertion of Theorem~\ref{nth2}, but at the same time of independent interest. 
%In Theorem~\ref{nth2} (i), we don't need \eqref{info} and the operator $\mathfrak B$ can be chosen arbitrarily in the class $\mathfrak{BC}$.

%\begin{theorem} \label{nth} 
%Let $f=0$ in \eqref{eq101}. If \eqref{IntroEq0}, \eqref{newlab0},  \eqref{ellip}, and \eqref{cond1} hold, then \eqref{eq101} has a solution $U$ for every $\mathfrak{B}$ in the class $\mathfrak{BC}$. Moreover, 
%$\Phi(x,U,\nabla U) \, U\in L^1(\Omega)$ and \eqref{sesim} holds with 
%$u=v=U$. 
%\end{theorem}
% }

The techniques and results we obtain here provide the means to address other types of lower order terms than $f\in L^1(\Omega)$, yet maintaining the class $\mathfrak{BC}$ of operators $\mathfrak{B}$. 
We briefly mention possible developments. It is natural to ask what happens when $\mathfrak{b}_j$ in \eqref{exa} is negative for $1\leq j\leq N$.
Then, the sign-condition on $\Phi$ in \eqref{cond1} breaks down. 
Since we impose no restriction on the growth of $\Phi$ with respect to $|u|$, the current paper lays the foundation for dealing with 
potentially singular 
lower order terms with no sign restriction. 
The approximation of such terms is afforded by our inclusion in \eqref{eq101} of the term $\Theta$. The approximate problems become of the type \eqref{eq101} for which we gain existence of solutions via our Theorem~\ref{nth2} (i). It is essential that we can take the solution itself as a test function. This fact can be exploited to obtain {\em a priori} estimates for the solutions and pass to the limit. 
Such an analysis goes beyond the scope of this paper and will be carried out elsewhere (see \cite{BC}).    

Our work is also motivated by the various applications of anisotropic elliptic and parabolic partial differential equations 
to the mathematical modelling of physical and mechanical processes. Such equations  provide, for instance, the mathematical models for the dynamics of fluids in anisotropic media when the conductivities of the media are distinct in different directions (see \cite{ADS}). They also appear in biology as a model for the propagation of epidemic diseases in heterogeneous domains \cite{BK}. 
With a rapidly growing literature on anisotropic problems, several questions have been resolved on the existence, uniqueness and regularity of weak solutions (see, for instance, \cite{ACCZ,AdBF,AC,AS,BDP,BMS,C,CV,dBFZ,DM,FVV,FGK,FGL,GLR,M}).
Many difficulties arise in passing from the isotropic setting to the anisotropic one 
since some fundamental tools available for the former (such as the strong maximum principle, see \cite{V}) cannot be extended to the latter.   

\vspace{-0.2cm}
%anisotropic elliptic and parabolic partial differential equations represent a burgeoning area of research because of their applications 
\subsection{Our assumptions} \label{asb} 
Let \eqref{IntroEq0} and \eqref{newlab0} hold.   
The anisotropic $\overrightarrow{p}$-Laplacian in \eqref{lapal} is the prototype for a coercive, bounded, continuous and pseudo-monotone operator $\mathcal A:W_0^{1,\overrightarrow{p}}(\Omega)\to W^{-1,\overrightarrow{p}'}(\Omega)$ in divergence form  
$\mathcal{A} u=
-\sum_{j=1}^N \partial_j (A_j(x,u,\nabla u))$, that is, 
\begin{equation} \label{form}
\langle \mathcal A u,v\rangle =\sum_{j=1}^N \int_\Omega A_j(x,u,\nabla u)\,\partial_j v\,dx\quad \mbox{for every } u,v\in  W_0^{1,\overrightarrow{p}}(\Omega).
\end{equation}

\medskip
\noindent $\bullet$ 
For each $1\leq j\leq N$, let $A_j(x,t,\xi):\Omega\times \mathbb R\times \mathbb R^N\to  \mathbb R$ be a Carath\' eodory function and assume that there exist constants $\nu_0,\nu>0$ and a nonnegative function  
$\eta_j\in L^{p_j'}(\Omega)$
such that for a.e. $x\in \Omega$, for all $(t,\xi)\in  \mathbb R\times \mathbb R^N$ and every $\widehat \xi\in \mathbb R^N$, we have
\begin{equation} \label{ellip}
\left.\begin{aligned}
& \sum_{i=1}^N A_i(x,t,\xi) \,\xi_i\geq \nu_0\sum_{i=1}^N |\xi_i|^{p_i} && \mbox{[coercivity]},&\\
& \sum_{i=1}^N\left(A_i(x,t,\xi)- A_i(x,t,\widehat \xi) \right)\left(\xi_i-\widehat \xi_i\right)>0\quad \text{if } \xi\not=\widehat \xi && \mbox{[monotonicity]},&\\
& |A_j(x,t,\xi)|\leq \nu \left[ \eta_j(x)+|t|^{p^\ast/p_j'}+\left(\sum_{i=1}^N |\xi_i|^{p_i}\right)^{1/p_j'}
\right]&& \mbox{[growth condition]}.&
\end{aligned}\right\}
\end{equation}

We note that in the growth condition in \eqref{ellip}, we take the greatest exponent for $|t|$ from the viewpoint of the anisotropic Sobolev inequalities.
This requires modifying the standard proof of pseudo-monotonicity of $\mathcal A$   
(see Lemma~\ref{lem-tt11}).

\medskip

\noindent $\bullet$ 
Suppose that $\Phi(x,t,\xi):\Omega\times \mathbb R\times \mathbb R^N\to  \mathbb R$ is a Carath\' eodory function and there exist  
a nonnegative function $c\in L^1(\Omega)$ and a continuous nondecreasing function 
$ \phi :\mathbb R\to \mathbb R^+$ 
such that for a.e. $x\in \Omega$ and for all $(t,\xi)\in  \mathbb R\times \mathbb R^N$, 
\begin{equation} \label{cond1}
\Phi(x,t,\xi)\, t\geq 0 \ \ \mbox{[sign-condition]}, \ \  
|\Phi(x,t,\xi)|\leq \phi(|t|) \left( \sum_{j=1}^N |\xi_j|^{p_j} +c(x)\right).   
\end{equation} 

For Theorem~\ref{nth2} (ii), we further assume that 
there exist constants $\tau,\gamma>0$ such that 
\begin{equation} \label{info}
|\Phi(x,t,\xi)| \geq \gamma \sum_{j=1}^N |\xi_j|^{p_j} \quad \text{for all } |t|\geq \tau,\ \mbox{a.e. } x\in \Omega\ \mbox{and all } \xi\in \mathbb R^N.
\end{equation}

\subsection{Sketch of the main ideas in the proof of Theorem~\ref{nth2}} \label{mainres}
 
%When $f=0$ in \eqref{eq101}, we obtain a solution with better properties and under weaker assumptions than those in Theorem~\ref{nth2}. 
We remark that because of $\Phi$, even when $f=0$, we cannot directly apply the theory of pseudo-monotone operators to prove the existence claim in Theorem~\ref{nth2} (i). To overcome this difficulty, we consider 
the approximate problem  
\begin{equation} \label{love}
\left\{  \begin{aligned}
& \mathcal A u_\varepsilon + \Phi_\varepsilon(x,u_\varepsilon,\nabla u_\varepsilon) +\Theta(x,u_\varepsilon,\nabla u_\varepsilon)=\mathfrak{B}u_\varepsilon\quad \mbox{in }  \Omega,\\
& u_\varepsilon\in W_0^{1,\overrightarrow{p}}(\Omega)
\end{aligned}  \right. \end{equation}
for which we obtain the existence of a solution $u_\varepsilon$ as a consequence of our Theorem~\ref{het} in Section~\ref{newso}. Indeed, $\Phi_\varepsilon+\Theta$ satisfies the same type of assumption as $\Theta$ in \eqref{newlab0}, that is, there exists a constant $C_\varepsilon>0$ such that $|(\Phi_\varepsilon+\Theta)(x,t,\xi)|\leq C_\varepsilon$ for a.e. $x\in \Omega$ and all $(t,\xi)\in \mathbb R\times \mathbb R^N$. 
Thus, by Theorem~\ref{het}, for every $\varepsilon>0$, the approximate problem \eqref{love} has a solution $u_\varepsilon\in W_0^{1,\overrightarrow{p}}(\Omega)$. 
In Lemma~\ref{l1} we prove {\em a priori} estimates in $W_0^{1,\overrightarrow{p}}(\Omega)$ for the solutions $u_\varepsilon$, which (up to a subsequence) converge weakly to some $U$ in $W_0^{1,\overrightarrow{p}}(\Omega)$ and a.e. in $\Omega$ as $\varepsilon\to 0$.  

We point out that in Section~\ref{stil}, we will be able to show that, up to a subsequence, 
\begin{equation} 
\label{oil}
u_\varepsilon\to U\ \mbox{(strongly) in } W_0^{1,\overrightarrow{p}}(\Omega) \ \ \mbox{as } \varepsilon\to 0.
\end{equation}
We achieve this by combining and extending techniques from the isotropic case in \cite{BBoc} and \cite{BOG} to establish in Lemma~\ref{arc} that, up to a subsequence of $u_\varepsilon$, we have 
\begin{equation} \label{extram}  \nabla u_\varepsilon\to \nabla U\ \mbox{a.e. in } \Omega\ \mbox{and } 
T_k (u_\varepsilon)\to T_k (U)\ \mbox{(strongly) in } W_0^{1,\overrightarrow{p}}(\Omega)\ \mbox{as } \varepsilon\to 0
\end{equation} for every integer $k\geq 1$, where 
$T_k(\cdot)$ is given in \eqref{trunc}. 
Then, we can pass to the limit as $\varepsilon\to 0$ in the weak formulation of the solution $u_\varepsilon$ and  obtain that $U$ is a solution of \eqref{eq101} with $f=0$ (see Subsection~\ref{pass}). 
%In Section~\ref{stil}, we improve \eqref{extram} in the form of \eqref{oil}.  

Generally speaking, the proof of Theorem~\ref{nth2} (ii), which we give in Section~\ref{lastsec}, follows a similar course with that of Theorem~\ref{nth2} (i) in Section~\ref{sec6}. But there are some modifications that we outline below. We approximate $f\in L^1(\Omega)$ by $L^\infty(\Omega)$-functions $f_\varepsilon$ and we apply Theorem~\ref{nth2} (i) to obtain a solution $U_\varepsilon$ for the problem
\begin{equation} \label{love2}
\left\{ \begin{aligned}
& \mathcal A U_\varepsilon + \Phi(x,U_\varepsilon,\nabla U_\varepsilon) +\Theta(x,U_\varepsilon,\nabla U_\varepsilon)=\mathfrak{B}U_\varepsilon+f_\varepsilon\quad \mbox{in }  \Omega,\\
&  U_\varepsilon\in W_0^{1,\overrightarrow{p}}(\Omega),\quad 
\Phi(x,U_\varepsilon,\nabla U_\varepsilon)\in L^1(\Omega) 
.
\end{aligned}\right.
\end{equation}  
We emphasize that unlike in \eqref{love}, we have $\Phi$ (and not $\Phi_\varepsilon$) in \eqref{love2}. Because of this reason, coupled with the introduction of $f_\varepsilon$, we need the extra assumption \eqref{info} and to choose $\mathfrak{B}$ in the class $\mathfrak{BC}_+$ to obtain that $\{U_\varepsilon\}_\varepsilon$ is uniformly bounded in $W_0^{1,\overrightarrow{p}}(\Omega)$ with respect to $\varepsilon$
(see Lemma~\ref{lem-ad} for details). 
Then, extracting a subsequence, $U_\varepsilon$ tends to some $U_0$ weakly in $W_0^{1,\overrightarrow{p}}(\Omega)$ and a.e. in $\Omega$. With an almost identical argument, we gain the counterpart of \eqref{extram}, namely, up to a subsequence,
$ \nabla U_\varepsilon\to \nabla U_0$ a.e. in $\Omega$ and  
$T_k (U_\varepsilon)\to T_k (U_0)$ (strongly) in  $W_0^{1,\overrightarrow{p}}(\Omega)$ as $\varepsilon\to 0$ 
for every integer $k\geq 1$. To conclude the proof of Theorem~\ref{nth2} (ii), it remains to pass to the limit in the weak formulation of $U_\varepsilon$. The change appearing here compared with the corresponding argument in Subsection~\ref{pass} is the strong convergence of $\Phi(x,U_\varepsilon,\nabla U_\varepsilon)$ to $ \Phi(x,U_0,\nabla U_0)$ in $L^1(\Omega)$. For the latter, we adapt an argument from 
\cite{BOG}. For details, we refer to Lemma~\ref{coro2} in Subsection~\ref{limpas}.  

\medskip

\noindent {\bf Structure of this paper.} 
In Section~\ref{newso} we prove an existence result (Theorem~\ref{het}), which gives the existence of a solution $u_\varepsilon$ of \eqref{love} for every $\varepsilon>0$. We dedicate Sections~\ref{sec6} and \ref{lastsec}  to the proof of Theorem~\ref{nth2} (i) and Theorem~\ref{nth2} (ii), respectively. In Section~\ref{stil} we make further comments on Theorem~\ref{nth2} (i) by proving the strong convergence in \eqref{oil}. In the Appendix we include some facts used in the paper and, for completeness, prove 
the anisotropic counterparts of well-known isotropic convergence results, see
Lemmas \ref{gnsb1} and \ref{joc1}. These will be used in 
the proof of Lemmas~\ref{lem-tt11} and \ref{arc}, respectively.

\medskip
\noindent {\bf Notation.} For $k>0$, we let $T_k:\mathbb R\to \mathbb R$ stand for the truncation at height $k$, that is,  
\begin{equation} \label{trunc} T_k(s)=s \quad \mbox{if } |s| \le k, \quad T_k(s)=k\, \frac{s}{|s|} \quad \mbox{if }  |s|>k.\end{equation}
\noindent Moreover, we define $G_k:\mathbb R\to \mathbb R$ by  
\begin{equation}\label{gk}
G_k(s)=s-T_k(s)\quad \mbox{for every }s\in \R.
\end{equation}
In particular, we have $G_k=0$ on $[-k,k]$ and $t\,G_k(t)\geq 0$ for every $t\in \mathbb R$.

\noindent For every $u\in  W_0^{1,\overrightarrow{p}}(\Omega)$ and 
for a.e. $x\in \Omega$, we define  
\begin{equation*} \label{zeta}  
\begin{aligned}
& \widehat{\Phi}(u)(x):=\Phi(x,u(x),\nabla u(x)),\quad  \widehat{\Theta}(u)(x):=\Theta(x,u(x),\nabla u(x)), \\
& \widehat{A}_j (u)(x)=A_j(x,u(x),\nabla u(x))\quad \text{for every } 1\leq j\leq N.
\end{aligned}
\end{equation*}

We set $\overrightarrow{p}=\left(p_1,p_2,\ldots,p_N\right)$ and 
$\overrightarrow{p}'=(p_1',p_2',\ldots,p_N')$. 

As usual, $\chi_\omega$ stands for the characteristic function of a set $\omega\subset \mathbb R^N$.

\section{An existence result} \label{newso}

Throughout this section, we assume \eqref{IntroEq0}, \eqref{newlab0}, and \eqref{ellip}, besides   
$\mathfrak{B}$ belonging to the class $\mathfrak{BC}$. Here, our aim is to prove the existence of a solution to the following problem
\begin{equation} \label{ditp}
\left\{  \begin{aligned} 
&\mathcal A u+\Theta (x,u,\nabla u)=\mathfrak{B}u \quad \mbox{in }  \Omega,\\ 
& u\in W_0^{1,\overrightarrow{p}}(\Omega).
\end{aligned} \right. 
\end{equation}
By a solution of \eqref{ditp}, we mean 
a function 
$u\in  W_0^{1,\overrightarrow{p}}(\Omega)$ such that 
\begin{equation} \label{ssdil}
\langle \mathcal Au,v\rangle
+
\int_\Omega \Theta(x,u,\nabla u)\,v\,dx-\langle \mathfrak{B} u,v\rangle =0 \quad \mbox{for every }v\in W_0^{1,\overrightarrow{p}}(\Omega) . 
\end{equation}	

\begin{theorem} \label{het} 
Problem \eqref{ditp} admits at least a solution.
\end{theorem}
We establish Theorem~\ref{het} via the theory of pseudo-monotone operators. Before giving the proof of Theorem~\ref{het} in Subsection~\ref{subn},  
we recall a few concepts that we need in the sequel (see, for example, \cite{bre} and \cite[p. 586]{ze}). 

\begin{defi} \label{nsr}
	{\rm An operator $\mathcal P:W_0^{1,\overrightarrow{p}}(\Omega)\to W^{-1,\overrightarrow{p}'}(\Omega)$  
		is called 
		
		($a_1$) {\em monotone} {\em (strictly monotone)} if $\langle \mathcal P u-\mathcal Pv,u-v\rangle\geq 0$ for every $u,v\in W_0^{1,\overrightarrow{p}}(\Omega)$ (with equality if and only if $u=v$);
		
		($a_2$) 
		{\em pseudo-monotone} if whenever $u_\ell\rightharpoonup u$ (weakly) in $W_0^{1,\overrightarrow{p}}(\Omega)$ as $\ell\to \infty$ and 
		$\limsup_{\ell\to \infty} \langle \mathcal P u_\ell,u_\ell-u\rangle\leq 0$, we get that
		$ \langle  \mathcal Pu,u-w\rangle\leq 
		\liminf_{\ell\to \infty}   \langle  \mathcal P u_\ell,u_\ell-w\rangle$ for all $ w\in W_0^{1,\overrightarrow{p}}(\Omega)$;
		
		($a_3$) {\em strongly continuous}\footnote{\, Strongly continuous operators are also referred to as {\em completely continuous} (see, for instance, Showalter \cite[p. 36]{Sh}).} if $u_\ell\rightharpoonup u$ (weakly) in $W_0^{1,\overrightarrow{p}}(\Omega)$ as $\ell\to \infty$  implies that $\mathcal P u_\ell\to \mathcal Pu$ in $W^{-1,\overrightarrow{p}'}(\Omega)$ as $\ell\to \infty$. 
		
		($a_4$) {\em coercive} if $ \langle \mathcal P  u,u\rangle/\|u\|_{W_0^{1,\overrightarrow{p}}(\Omega)}\to \infty$ as $  \|u\|_{W_0^{1,\overrightarrow{p}}(\Omega)}\to \infty$;
		
		($a_5$) {\em of M type}\footnote{\, Some authors (see, for example, Le Dret \cite[p. 232]{L}) use the terminology {\em sense 1 pseudomonotone} instead of M type.} if $u_\ell\rightharpoonup u$ (weakly) in $W_0^{1,\overrightarrow{p}}(\Omega)$ as $\ell\to \infty$, together with  $\mathcal P u_\ell \rightharpoonup g$ (weakly) in $W^{-1,\overrightarrow{p}'}(\Omega)$ as $\ell\to \infty$ and  $\limsup_{\ell \to \infty} \langle \mathcal P u_\ell,u_\ell \rangle \le \langle g,u\rangle$, imply that 
		$ g=\mathcal P u$ and $\langle \mathcal P u_\ell,u_\ell\rangle \to \langle g,u\rangle\ \mbox{as } \ell\to \infty.
		$	
	} 
\end{defi}	

\begin{proposition}\label{prop_pmm}
	Every strongly continuous operator $\mathcal P:W_0^{1,\overrightarrow{p}}(\Omega)\to W^{-1,\overrightarrow{p}'}(\Omega)$ is pseudo-monotone.  Every bounded operator $\mathcal P:W_0^{1,\overrightarrow{p}}(\Omega)\to W^{-1,\overrightarrow{p}'}(\Omega)$ of M type is pseudo-monotone. The sum of two pseudo-monotone operators is pseudo-monotone.
\end{proposition}

\subsection{Proof of Theorem~\ref{het}} \label{subn}
We immediately observe from \eqref{newlab0} that the operator $\mathcal P_\Theta:W_0^{1,\overrightarrow{p}}(\Omega)\to W^{-1,\overrightarrow{p}'}(\Omega)$ is bounded, where we define
\begin{equation} \label{pobn}\langle  \mathcal P_\Theta(u),v\rangle:=\int_\Omega  \widehat{\Theta}(u)\,v\,dx\quad \mbox{for every }u,v\in W_0^{1,\overrightarrow{p}}(\Omega). 
\end{equation} 

In view of \eqref{ssdil},   
the existence of a solution to \eqref{ditp} follows whenever the operator $\mathcal A+\mathcal P_\Theta-\mathfrak{B}:W_0^{1,\overrightarrow{p}}(\Omega)\to W^{-1,\overrightarrow{p}'}(\Omega)$ is surjective. 
Since $W_0^{1,\overrightarrow{p}}(\Omega)$ is a 
real, reflexive, and separable Banach space, it is known that $\mathcal A+\mathcal P_\Theta-\mathfrak{B}:W_0^{1,\overrightarrow{p}}(\Omega)\to W^{-1,\overrightarrow{p}'}(\Omega)$ is surjective whenever it is bounded, coercive and pseudo-monotone (see, for instance, \cite[p. 589]{ze}). In Lemma~\ref{ale0}, we establish the boundedness and coercivity of $\mathcal A+\mathcal P_\Theta-\mathfrak{B}$, whereas its pseudo-monotonicity is concluded in Corollary~\ref{pso}. 

For the reader's convenience and to make our presentation self-contained, we give all the details about the pseudo-monotonicity of $\mathcal A+\mathcal P_\Theta-\mathfrak{B}:W_0^{1,\overrightarrow{p}}(\Omega)\to W^{-1,\overrightarrow{p}'}(\Omega)$. These computations could be of interest also in the corresponding isotropic case, when, to our best knowledge,  only very special instances of $\mathfrak{B}$ have been considered and the details are usually scattered in the literature.

The property $(P_2)$  
ensures that $\pm\mathfrak{B}:W_0^{1,\overrightarrow{p}}(\Omega)\to W^{-1,\overrightarrow{p}'}(\Omega)$ is strongly continuous (see Lemma~\ref{strong0}) and, hence, pseudo-monotone by Proposition~\ref{prop_pmm}. 
As the sum of two pseudo-monotone operators is pseudo-monotone, to prove that $\mathcal A+\mathcal P_\Theta-\mathfrak{B}:W_0^{1,\overrightarrow{p}}(\Omega)\to W^{-1,\overrightarrow{p}'}(\Omega)$ is pseudo-monotone, it suffices to show that $\mathcal A+\mathcal P_\Theta:W_0^{1,\overrightarrow{p}}(\Omega)\to W^{-1,\overrightarrow{p}'}(\Omega)$
is pseudo-monotone. The proof of the latter is more involved, see   
Lemma~\ref{lem-tt11}. In view of Proposition~\ref{prop_pmm} and Lemma~\ref{lem-tt00}, it is enough to show that $\mathcal A+\mathcal P_\Theta$ is an operator of M type.   
We proceed with the details.

\begin{lemma} \label{lem-tt00} 	
The operator $\mathcal A +\mathcal P_\Theta  
:W_0^{1,\overrightarrow{p}}(\Omega)\to W^{-1,\overrightarrow{p}'}(\Omega)$ is  bounded, coercive and continuous. 
\end{lemma}
\begin{proof}
The boundedness of the operator   
$\mathcal A +\mathcal P_\Theta :W_0^{1,\overrightarrow{p}}(\Omega)\to W^{-1,\overrightarrow{p}'}(\Omega)$ is a consequence of the growth condition of $ A_j$ in \eqref{ellip}, coupled with \eqref{newlab0}.   
The coercivity of $\mathcal A +\mathcal P_\Theta$ follows readily from \eqref{newlab0} and the coercivity assumption in \eqref{ellip}. 
Moreover, by H\"older's inequality and the continuity of the embedding $W_0^{1,\overrightarrow{p}}(\Omega) \hookrightarrow L^{p^\ast}(\Omega)$, we find a positive constant $C$ such that, for every $u_1,u_2\in W_0^{1,\overrightarrow{p}}(\Omega)$, 
$$
\begin{aligned} &\|(\mathcal A + \mathcal P_\Theta
) (u_1)- (\mathcal A+ \mathcal P_\Theta) (u_2)\|_{W^{-1,\overrightarrow{p}'}(\Omega)}
\\
& \leq \sup_{
	\substack{
		v\in W_0^{1,\overrightarrow{p}}(\Omega),\\  \|v\|_{W_0^{1,\overrightarrow{p}}(\Omega)}\leq 1}}
\Big(\sum_{j=1}^N \int_\Omega |\widehat{A}_j(u_1)-\widehat{A}_j(u_2)| |\partial_j v|\,dx+\int_\Omega |
\widehat{\Theta}
(u_1)- \widehat{\Theta} (u_2)| |v| \,dx \Big)\\
&\leq \sum_{j=1}^N \|\widehat{A}_j(u_1)-\widehat{A}_j(u_2)\|_{L^{p_j'}(\Omega)}+C\, ||   \widehat{\Theta} (u_1)-  \widehat{\Theta}(u_2)||_{L^{(p^\ast)'}(\Omega)}. 
\end{aligned}
$$

We get the continuity of $\mathcal A+\mathcal P_\Theta
:W_0^{1,\overrightarrow{p}}(\Omega)\to W^{-1,\overrightarrow{p}'}(\Omega)$ by showing the following.

\vspace{0.2cm}
\noindent {\bf Claim:} The mappings $\widehat{\Theta}:W_0^{1,\overrightarrow{p}}(\Omega)\to L^{(p^\ast)'}(\Omega)$ and $\widehat{A}_j:W_0^{1,\overrightarrow{p}}(\Omega)\to L^{p_j'}(\Omega)$ are continuous for each $1\leq j\leq N$.

\vspace{0.2cm}
\noindent {\em Proof of the Claim.}
	Let $1\leq j\leq N$ be arbitrary.   
	By the growth condition of $A_j$ in \eqref{ellip}, there exist a constant $C>0 $ and a nonnegative function $\eta_j\in L^{p_j'}(\Omega)$ such that 
	\begin{equation}\label{pm50}
	|\widehat{A}_j(u)|^{p_j'}\leq C \left( \eta_j^{p_j'}+|u|^{p^\ast} +
	\sum_{i=1}^N |\partial_i u|^{p_i}	\right)\in L^1(\Omega)
	\end{equation} 
		for all $ u\in W_0^{1,\overrightarrow{p}}(\Omega)$. 
	Since the embeddings $W_0^{1,\overrightarrow{p}}(\Omega)\hookrightarrow L^{p^\ast}(\Omega)$ and $L^\infty(\Omega)\hookrightarrow L^{(p^\ast)'}(\Omega)$ are continuous, from \eqref{pm50} and \eqref{newlab0}, we infer that 
	$\widehat{A}_j:W_0^{1,\overrightarrow{p}}(\Omega)\to L^{p_j'}(\Omega)$ and $\widehat{\Theta}:W_0^{1,\overrightarrow{p}}(\Omega)\to L^{(p^\ast)'}(\Omega)$ are well-defined. 
	To prove the continuity of these mappings, we 
	let $u_n\to u$ (strongly) in $W_0^{1,\overrightarrow{p}}(\Omega)$ as $n\to \infty$.
	Hence, $u_n\to u$ (strongly) in $L^{p^\ast}(\Omega)$ and $\partial_i u_{n}\to \partial_i u$ (strongly) in $L^{p_i}(\Omega)$ as $n\to \infty$ for every $1\leq i\leq N$. Now, using \eqref{pm50} with $u_n$ instead of $u$, we obtain that $\{|\widehat{A}_j(u_n)|^{p_j'}\}_{n\geq 1}$ is uniformly integrable over $\Omega$. 
	By passing to    
	a subsequence $\{u_{n_k}\}_{k\geq 1}$ of $\{u_n\}$, we have
	$u_{n_k}\to u$ and $\nabla u_{n_k}\to \nabla u$ a.e. in $\Omega$ as $k\to \infty$. Since $A_j$ and $\Theta$ are  Carath\' eodory functions, we have $\widehat{\Theta}(u_{n_k})\to \widehat{\Theta}(u)$ and 
	$\widehat{A}_j(u_{n_k})\to \widehat{A}_j(u)$ a.e. in $\Omega$ as $k\to \infty$.  
	Then, by \eqref{newlab0} and the Dominated Convergence Theorem, 
	$\widehat{\Theta}(u_{n_k})\to \widehat{\Theta}(u)$ in $L^{(p^\ast)'}(\Omega)$. By Vitali's Theorem, we see that  
	$\widehat{A}_j(u_{n_k})\to \widehat{A}_j(u)$  in $L^{p_j'}(\Omega)$
	as $k\to \infty$. 
	Since the limits $\widehat\Theta(u)$ and $\widehat{A}_j(u)$ are independent of the subsequence $\{u_{n_k}\}_{k\geq 1}$, we conclude that $\widehat{\Theta}(u_{n})\to \widehat{\Theta}(u)$ in $L^{(p^\ast)'}(\Omega)$ and $\widehat{A}_j(u_{n})\to \widehat{A}_j(u)$ in $L^{p_j'}(\Omega)$ as $n\to \infty$. 

	This completes the proof of the Claim and of Lemma~\ref{lem-tt00}. 
\end{proof}

\begin{lemma} \label{ale0} 
	The operator $\mathcal A+\mathcal P_\Theta-\mathfrak{B}:W_0^{1,\overrightarrow{p}}(\Omega) \to W^{-1,\overrightarrow{p}'}(\Omega)$ is bounded and coercive. 
\end{lemma}

\begin{proof}
	Using Lemma \ref{lem-tt00} and Definition~\ref{doi}, we find that $\mathcal A+\mathcal P_\Theta-\mathfrak{B}$
	is a bounded operator from
	$ W_0^{1,\overrightarrow{p}}(\Omega)$ into $ W^{-1,\overrightarrow{p}'}(\Omega)$. 
	We now show that it is also coercive, namely,  
	\begin{equation} \label{coer0} \frac{ \langle \mathcal Au+\mathcal P_\Theta
		(u)-\mathfrak{B} u,u\rangle}{\|u\|_{W_0^{1,\overrightarrow{p}}(\Omega)}}\to \infty
	\ \mbox{ as }\  
	\|u\|_{W_0^{1,\overrightarrow{p}}(\Omega)}\to \infty.\end{equation}
	Using \eqref{newlab0} and the continuity of the embedding $W_0^{1,\overrightarrow{p}}(\Omega)\hookrightarrow L^{1}(\Omega)$, we find a constant $C>0$ such that 
	$ \langle \mathcal P_\Theta
	(u) ,u\rangle\geq - C \|u\|_{W_0^{1,\overrightarrow{p}}(\Omega)}$ 
	for every $ u\in W_0^{1,\overrightarrow{p}}(\Omega)$. Then, by the coercivity property of $\mathcal A-\mathfrak B$, 
	we readily conclude \eqref{coer0}. 
	\end{proof}

\begin{lemma} \label{strong0} 
	Every operator $\mathcal{B}:W_0^{1,\overrightarrow{p}}(\Omega)\to W^{-1,\overrightarrow{p}'}(\Omega)$ satisfying $(P_2)$ is strongly continuous.   
\end{lemma}
\begin{proof}
	Let $u_\ell\rightharpoonup u$ (weakly) in $W_0^{1,\overrightarrow{p}}(\Omega)$ as $\ell\to \infty$. We show that $\mathcal{B} u_\ell\to \mathcal{B} u$ in $W^{-1,\overrightarrow{p}'}(\Omega)$ as $\ell\to \infty$.  	
	Assume by contradiction that there exist $\varepsilon_0>0$ and a subsequence of $\{u_\ell\}$ (relabeled $\{u_\ell\}$) such that 
	$$ \sup_{\substack{
			v\in W_0^{1,\overrightarrow{p}}(\Omega),\\  \|v\|_{W_0^{1,\overrightarrow{p}}(\Omega)}\leq 1}} 
	\left|\langle \mathcal{B} u_\ell -\mathcal{B} u,v\rangle 
	\right|  >\varepsilon_0\quad \mbox{for every } \ell\geq 1. 
	$$ Hence, there also exists $\{v_\ell\}$ in $W_0^{1,\overrightarrow{p}}(\Omega)$ with $\|v_\ell\|_{W_0^{1,\overrightarrow{p}}(\Omega)}\leq 1$ such that 
	\begin{equation}\label{cont0}  \left|\langle \mathcal{B} u_\ell -\mathcal{B} u,v_\ell\rangle 
	\right|>\varepsilon_0\quad \mbox{for all } \ell\geq 1.
	\end{equation}
	By the boundedness of $\{v_\ell\}$ in $W_0^{1,\overrightarrow{p}}(\Omega)$, up to a subsequence,  
	$v_\ell\rightharpoonup v$ (weakly) in 
	$W_0^{1,\overrightarrow{p}}(\Omega)$ as $\ell\to \infty$.
	Since $\mathcal{B} u\in W^{-1,\overrightarrow{p}'}(\Omega) $, we have 
	$ \langle \mathcal{B} u,v_\ell \rangle\to  \langle \mathcal{B} u,v \rangle \ \mbox{as } \ell\to \infty.
	$
	Hence, from ($P_2$)  we find that $ \left|\langle \mathcal{B} u_\ell -\mathcal{B} u,v_\ell\rangle 
	\right| \to 0$ as $\ell \to \infty$, which is in contradiction with \eqref{cont0}. Thus, $\mathcal B$ is strongly continuous, completing the proof. \end{proof}

\begin{lemma} \label{lem-tt11}  
The operator $\mathcal A +\mathcal P_\Theta  
:W_0^{1,\overrightarrow{p}}(\Omega)\to W^{-1,\overrightarrow{p}'}(\Omega)$ is  pseudo-monotone. 
\end{lemma}
\begin{proof}
Since  the operator $\mathcal A+ \mathcal P_\Theta$ is bounded, it is enough to show that it is of M type (see Proposition \ref{prop_pmm}). To this end, suppose that there exist $u$, $\{u_\ell\}_{\ell \ge 1}$ in $ W_0^{1,\overrightarrow{p}}(\Omega)$ and $g \in W^{-1,\overrightarrow{p}'}(\Omega)$ such that
\begin{eqnarray} 
&&u_\ell \rightharpoonup u  \ \mbox{(weakly) in } W_0^{1,\overrightarrow{p}}(\Omega) \ \mathrm{as}\ \ell \to \infty, \label{pm10}
\\
&& (\mathcal A + \mathcal P_\Theta)(u_\ell) \rightharpoonup g \mbox{ (weakly) in } W^{-1,\overrightarrow{p}'}(\Omega) \ \mathrm{as}\  \ell \to \infty, \label{pmkl0}
\\
&&\displaystyle \limsup_{\ell \to \infty}\, \langle (\mathcal A + \mathcal P_\Theta)(u_\ell),u_\ell\rangle \le \langle g,u\rangle. \label{pm20}
\end{eqnarray}
We prove that
\begin{eqnarray}
&& g=(\mathcal A+ \mathcal P_\Theta)(u), \label{pm30}
\\
&& \langle (\mathcal A + \mathcal P_\Theta)(u_\ell),u_\ell\rangle \to \langle g,u\rangle \quad \mbox{as} \> \ell \to \infty. \label{pm40}
\end{eqnarray}
We first show that \eqref{pm40} holds. 
From \eqref{pm10} and the compactness of the embedding $W_0^{1,\overrightarrow{p}}(\Omega)
\hookrightarrow L^p(\Omega)$, we obtain that, up to a subsequence, 
\begin{equation} \label{strobe}  u_\ell \to u\ \mbox{strongly in } L^p(\Omega)\ \mbox{and a.e. in } \Omega.	
\end{equation}
Moreover, using \eqref{pm50} with $u$ replaced by $u_\ell$, we get that $\widehat{A}_j(u_\ell)$ is bounded in $L^{p_j'}(\Omega)$ for every $1 \le j \le N$. Hence, in view of \eqref{newlab0}, there exist $\mu\in L^{p'}(\Omega)$ and $g_j\in L^{p_j'}(\Omega)$ for $1\leq j\leq N$ so that, up to a further subsequence of $\{u_\ell\}$ (denoted by $\{u_\ell\}$), we have
\begin{equation}\label{pm60} 
\widehat \Theta
(u_\ell) \rightharpoonup  \mu \quad \mbox{(weakly) in } L^{p'}(\Omega)  \quad \mbox{and }\quad 
\widehat{A}_j (u_\ell) \rightharpoonup g_j \quad \mbox{(weakly) in } L^{p_j'}(\Omega) 
\end{equation}
as $\ell \to \infty$ 
for every $1\leq j\leq N$. Thus, by the reflexivity of $W_0^{1,\overrightarrow{p}}(\Omega)$ and \eqref{pmkl0}, we get 
\begin{equation} \label{mkslu0}
\langle g,v\rangle= \lim_{\ell\to \infty} 
\langle	(\mathcal A +\mathcal P_\Theta)(u_\ell),v\rangle
=\sum_{j=1}^N \int_\Omega g_j\,\partial_j v\,dx+\int_\Omega \mu \,v\,dx
\end{equation}
for every $v\in W_0^{1,\overrightarrow{p}}(\Omega)$. 
From \eqref{strobe} and \eqref{pm60}, we infer that
\begin{equation} \label{funk}
\lim_{\ell\to \infty} \int_\Omega \widehat{\Theta}(u_\ell)\,u_\ell\,dx=\int_\Omega \mu\,u\,dx.
\end{equation}
From \eqref{pm20}, \eqref{mkslu0} and \eqref{funk}, we obtain that
\begin{equation} \label{nsms0} 
\begin{aligned}
\limsup_{\ell \to \infty}\, \langle (\mathcal A + 
\mathcal P_\Theta )(u_\ell),u_\ell\rangle&=
\limsup_{\ell \to \infty} \left(\sum_{j=1}^N \int_\Omega \widehat{A}_j(u_\ell)\,\partial_j u_\ell\, dx+\int_\Omega 
\widehat{\Theta}(u_\ell)\,
u_\ell\,dx\right)\\
& \leq  \langle g, u \rangle = \sum_{j=1}^N \int_\Omega g_j \, \partial_j u \, dx+\int_\Omega \mu\,  u\, dx,
\end{aligned}
\end{equation}
that is, 
\begin{equation}\label{pm210}
\limsup_{\ell \to \infty} \sum_{j=1}^N \int_\Omega \widehat{A}_j(u_\ell)\,\partial_j u_\ell\, dx \le \sum_{j=1}^N \int_\Omega g_j 
\, \partial_j u
\, dx.
\end{equation}
In light of \eqref{funk}--\eqref{pm210}, we conclude \eqref{pm40} by showing that
\begin{equation}\label{pm220}
\liminf_{\ell \to \infty} \sum_{j=1}^N \int_\Omega \widehat{A}_j(u_\ell)\,\partial_j u_\ell\, dx \ge \sum_{j=1}^N \int_\Omega g_j\, \partial_j u \, dx.
\end{equation}

\noindent The proof of \eqref{pm220} is a bit different from the classical one in the isotropic case since in our growth condition on $A_j$ in \eqref{ellip}, we have taken the greatest exponent for $|t|$ from the viewpoint of the anisotropic Sobolev inequalities. Let us emphasize what is new compared with the classical proof. Let $1\leq j\leq N$ be arbitrary. 
Since $u_\ell \to u$ a.e. in $\Omega$ and $A_j$ is a Carath\'eodory function, 
we see that 
\begin{equation} \label{pointw}  A_j(x,u_\ell,\nabla u)\to A_j(x,u,\nabla u)\quad \mbox{a.e. in } \Omega. \end{equation}
The growth condition in \eqref{ellip} gives a constant $C>0$ and a nonnegative function $\eta_j\in L^{p_j'}(\Omega)$ such that
\begin{equation} \label{guy}  |A_j(x,u_\ell,\nabla u)|^{p_j'}\leq C \left( \eta_j^{p_j'}+|u_\ell|^{p^\ast} +
\sum_{i=1}^N |\partial_i u|^{p_i}\right) 
\end{equation}  for every $\ell\geq 1$.   
Because the power of $|u_\ell|$ in the right-hand side of \eqref{guy} is $p^\ast$, the critical exponent, 
the compactness of the embedding $W_0^{1,\overrightarrow{p}}(\Omega)
\hookrightarrow L^{p^\ast}(\Omega) $ fails, in general. 
Hence, 
we cannot claim anymore that $\{|A_j(x,u_\ell,\nabla u)|^{p_j'}\}_{\ell \ge 1}$ is uniformly integrable over $\Omega$. Thus, we cannot apply Vitali's theorem to deduce the strong convergence of 
$A_j(x,u_\ell,\nabla u) $ to $A_j(x,u,\nabla u)$  in $  L^{p_j'}(\Omega)$ as $ \ell\to \infty$. However, if we fix $k\geq 1$, then by the growth condition in \eqref{ellip}, we infer that  
$$ \{|A_j(x,u_\ell,\nabla u)|^{p_j'}\,\chi_{\{|u_\ell|\leq k\}} \}_{\ell\geq 1} \quad\mbox{is uniformly integrable over }\Omega.$$
Then, since $\chi_{\{|u_\ell|\leq k\}}\to \chi_{\{|u|\leq k\}}$ as $\ell\to \infty$, from  \eqref{pointw} and Vitali's theorem, we get 
\begin{equation} \label{mxn0}   A_j(x,u_\ell,\nabla u)\, \chi_{\{|u_\ell|\leq k\}} \to A_j(x,u,\nabla u)\, \chi_{\{|u|\leq k\}}  \mbox{ strongly in }  L^{p_j'}(\Omega)\ \mbox{as } \ell\to \infty.
\end{equation}
We return to the proof of \eqref{pm220} with modifications suggested by \eqref{mxn0}. By the Dominated Convergence Theorem, we obtain 
\eqref{pm220} by showing that for every integer $k\geq 1$, 
\begin{equation} \label{boi} 
\liminf_{\ell \to \infty} \sum_{j=1}^N \int_\Omega \widehat{A}_j(u_\ell)\,\partial_j u_\ell\, dx \ge \sum_{j=1}^N \int_\Omega g_j\, (\partial_j u) \, \chi_{ \{|u|\leq k \}} \, dx.
\end{equation}
%Indeed, by letting $k\to \infty$ in \eqref{boi} and applying the  we arrive at \eqref{pm220}.

\vspace{0.2cm}
\noindent  {\em Proof of \eqref{boi}}. Fix an integer $k\geq 1$. 
The coercivity condition in \eqref{ellip} yields that
\begin{equation}\label{winn}
\sum_{j=1}^N  \widehat{A}_j(u_\ell)\,\partial_j u_\ell  
\geq 
\sum_{j=1}^N \widehat{A}_j(u_\ell) 
\,(\partial_j u_\ell)
\,\chi_{\{|u_\ell|\leq k\}}.
\end{equation}
For the right-hand side of \eqref{winn}, we use  
the monotonicity condition in \eqref{ellip}, that is, 
\begin{equation} \label{limu}	
\begin{aligned}
\sum_{j=1}^N \widehat{A}_j(u_\ell) 
\,(\partial_j u_\ell)
\,\chi_{\{|u_\ell|\leq k\}}
\geq & \sum_{j=1}^N \widehat{A}_j(u_\ell)\,(\partial_j u)
\,\chi_{\{|u_\ell|\leq k\}}\\
&
+\sum_{j=1}^N A_j(x,u_\ell,\nabla u)\,
(\partial_j u_\ell-\partial_j u)\,\chi_{\{|u_\ell|\leq k\}}.
\end{aligned}	
\end{equation}
Let $1\leq j\leq N$ be arbitrary. By the Dominated Convergence Theorem, we have $(\partial_j u)
\,\chi_{\{|u_\ell|\leq k\}}\to (\partial_j u)
\,\chi_{\{|u|\leq k\}}$	strongly in $L^{p_j}(\Omega)$ as $\ell\to \infty$. Recall from \eqref{pm60} that $\widehat{A}_j(u_\ell) \rightharpoonup  g_j$ (weakly) in $L^{p_j'}(\Omega)$ as $\ell\to \infty$. Hence we have   
\begin{equation}\label{kim} 
\widehat{A}_j(u_\ell)\,(\partial_j u)\,\chi_{\{|u_\ell|\leq k\}}\to  g_j\,(\partial_j u) \chi_{\{|u|\leq k\}}\ \mbox{strongly in } L^1(\Omega) \ \mbox{as } \ell\to \infty.
\end{equation} 
Since $\partial_j u_\ell \rightharpoonup \partial_j u$ (weakly) in $L^{p_j}(\Omega)$ as $\ell\to \infty$, using \eqref{mxn0}, we gain the following
\begin{equation}\label{pm80}
A_j(x,u_\ell,\nabla u)\,	(\partial_j u_\ell-\partial_j u)\,\chi_{\{|u_\ell|\leq k\}}
\to 0\quad \mbox{strongly in } L^1(\Omega). 
\end{equation}
In light of \eqref{kim} and \eqref{pm80}, we see 
that
$$ \sum_{j=1}^N \int_\Omega \widehat{A}_j(u_\ell)\,(\partial_j u)
\,\chi_{\{|u_\ell|\leq k\}}
+\sum_{j=1}^N \int_\Omega A_j(x,u_\ell,\nabla u)\,
(\partial_j u_\ell-\partial_j u)\,\chi_{\{|u_\ell|\leq k\}} 
$$ 
converges as $\ell\to \infty$ to the right-hand side of \eqref{boi}. Using this convergence, jointly with the inequalities in \eqref{winn} and \eqref{limu}, we conclude the proof of \eqref{boi}. 

\vspace{0.2cm}
As mentioned above, from \eqref{boi} we obtain \eqref{pm220}.  
Inequalities \eqref{pm210} and \eqref{pm220} ensure that
\begin{equation} \label{dgsb}
\lim_{\ell \to \infty} \sum_{j=1}^N \int_\Omega \widehat{A}_j(u_\ell)\,\partial_j u_\ell \, dx =\sum_{j=1}^N \int_\Omega g_j \,\partial_j u \, dx.
\end{equation}

It remains to establish \eqref{pm30}. 
From  \eqref{kim}--\eqref{dgsb}, we get 
\begin{equation} \label{emi} \sum_{j=1}^N \int_\Omega \left[
A_j(x,u_\ell,\nabla u_\ell)-A_j(x,u_\ell,\nabla u)
\right] (\partial_j u_\ell-\partial_j u) \,\chi_{\{|u_\ell|\leq k\}}\, dx\to 0\quad \mbox{as } \ell\to \infty. 
\end{equation} 
By \eqref{emi} and the monotonicity condition in \eqref{ellip}, we infer that 
$$ \sum_{j=1}^N \left[
A_j(x,u_\ell,\nabla u_\ell)-A_j(x,u_\ell,\nabla u)
\right] (\partial_j u_\ell-\partial_j u) \to 0\ \mbox{a.e in } \{|u_\ell|\leq k\}  \ \mbox{as }\ell\to \infty.
$$
By a standard diagonal argument, we can find a subsequence of $\{u_\ell\}$ (still denoted by $\{u_\ell\}$) such that the above convergence holds for every $k\geq 1$. This implies that 
$$  \sum_{j=1}^N \left[
A_j(x,u_\ell,\nabla u_\ell)-A_j(x,u_\ell,\nabla u)
\right] (\partial_j u_\ell-\partial_j u) \to 0\ \mbox{a.e. in }\Omega  \ \mbox{as }\ell\to \infty.
$$
In the notation of Subsection~\ref{prel} in the Appendix, we have
$ \mathcal D_{u_\ell}(u_\ell,u)\to 0$ a.e. in $ \Omega$ as  $\ell\to \infty.  
$
Thus, by Lemma~\ref{gnsb1} in the Appendix, up to a subsequence, 
$
\nabla u_\ell\to \nabla u$ a.e. in $\Omega$ as $\ell\to \infty$.  
Since $\Phi$ and $A_j$ (with $1\leq j\leq N$) are Carath\'eodory 
functions,  we find that 
$ \widehat{\Theta}(u_\ell)\to \widehat{\Theta}(u)$ and $ 
\widehat{A}_j(u_\ell)\to \widehat{A}_j(u)$ a.e. in $\Omega$ as $\ell\to \infty.$ Using this fact, jointly with \eqref{pm60}, we obtain that 
$\mu=\widehat{\Theta}(u)$ and $g_j=  \widehat{A}_j(u)$ for every $1\leq j\leq N$. 
From \eqref{mkslu0}  we conclude that 
$$ \langle g,v\rangle=\sum_{j=1}^N \int_\Omega \widehat{A}_j(u)\,\partial_j v\,dx+\int_\Omega \widehat{\Theta}(u) \,v\,dx =\langle \mathcal Au,v\rangle +\langle \mathcal P_\Theta(u),v\rangle
$$ for every $v\in W_0^{1,\overrightarrow{p}}(\Omega)$. This proves that 
$g=(\mathcal A +\mathcal P_\Theta)\,u$, namely, \eqref{pm30} holds.   

In conclusion, by satisfying the M type condition in Definition~\ref{nsr}, the operator $\mathcal A +\mathcal P_\Theta$ turns out to be pseudo-monotone. 
\end{proof}

\begin{cor} \label{pso} The operator 
	$\mathcal A+\mathcal P_\Theta-\mathfrak{B}:W_0^{1,\overrightarrow{p}}(\Omega)\to W^{-1,\overrightarrow{p}'}(\Omega)$ is pseudo-monotone.
	\end{cor}

\begin{proof} 
	The claim follows from Lemmas~\ref{strong0} and \ref{lem-tt11}, jointly with Proposition~\ref{prop_pmm}. 
	\end{proof}

\section{Proof of the first assertion in Theorem~\ref{nth2}} \label{sec6}

Here, we assume \eqref{IntroEq0}, \eqref{newlab0}, \eqref{ellip} and \eqref{cond1}, whereas $\mathfrak B$ belongs to the class $\mathfrak{BC}$.  
For every $\varepsilon>0$, we define $\Phi_\varepsilon(x,t,\xi):\Omega\times \mathbb R\times \mathbb R^N\to \mathbb R$ as follows
\begin{equation} \label{fina} \Phi_\varepsilon(x,t,\xi):=\frac{\Phi(x,t,\xi)}{1+\varepsilon\, |\Phi(x,t,\xi)|}
\end{equation}
for a.e. $x\in \Omega$ and all $ (t,\xi)\in \mathbb R\times \mathbb R^N$. 
For $\varepsilon>0$ fixed, $\Phi_\varepsilon$ satisfies the same properties as $\Phi$, that is, the sign-condition
and the growth condition in \eqref{cond1}. Moreover, $\Phi_\varepsilon$ becomes a bounded function, namely, for a.e. $x\in \Omega$ and every $(t,\xi)\in \mathbb R\times \mathbb R^N$,  
\begin{equation} \label{sig}
\Phi_\varepsilon(x,t,\xi)\,t\geq 0,\quad 
|\Phi_\varepsilon(x,t,\xi)|\leq \min\,\{ |\Phi(x,t,\xi)|,1/\varepsilon\}. 
\end{equation}
We consider approximate problems to  
\eqref{eq101} with 
$f=0$ and $\Phi$ replaced by 
$\Phi_\varepsilon$, that is,
\begin{equation} \label{dit2}
\left\{  \begin{aligned} 
&\mathcal A u_\varepsilon + \Phi_\varepsilon(x,u_\varepsilon,\nabla u_\varepsilon) +\Theta(x,u_\varepsilon,\nabla u_\varepsilon)=\mathfrak{B}u_\varepsilon\quad \mbox{in }  \Omega,\\ 
& u_\varepsilon\in W_0^{1,\overrightarrow{p}}(\Omega).
\end{aligned} \right. 
\end{equation}

\noindent As in Theorem~\ref{het}, by a solution of \eqref{dit2}, we mean a function
$u_\varepsilon\in  W_0^{1,\overrightarrow{p}}(\Omega)$ such that 
\begin{equation} \label{ssdili}
\sum_{j=1}^N \int_\Omega \widehat{A}_j(u_\varepsilon)\,\partial_j v\,dx+ \int_\Omega \widehat{\Phi}_\varepsilon(u_\varepsilon) \,v\,dx+
\int_\Omega \widehat{\Theta}(u_\varepsilon)\,v\,dx=\langle \mathfrak{B} u_\varepsilon,v\rangle 
\end{equation} 
for every $v\in W_0^{1,\overrightarrow{p}}(\Omega)$, where for convenience we 
define
$$ \widehat{\Phi}_\varepsilon(u_\varepsilon)(x):=\Phi_\varepsilon(x,u_\varepsilon(x),\nabla u_\varepsilon(x))  
\quad \mbox{for a.e. }x\in \Omega. $$

\begin{lemma} \label{l1}
For every $\varepsilon>0$, there exists a solution 
$u_\varepsilon$ for \eqref{dit2}. Moreover, we have: 	

$(a) $ For a positive constant $C$, independent of $\varepsilon$, it holds 
\begin{equation}\label{c11}
\|u_\varepsilon\|_{W_0^{1,\overrightarrow{p}}(\Omega)}+ \int_\Omega \widehat\Phi_\varepsilon(u_\varepsilon)\,u_\varepsilon\,dx \leq C.
\end{equation}

$(b)$ There exists $U\in W_0^{1,\overrightarrow{p}}(\Omega)$ such that, up to a subsequence of $\{u_\varepsilon\}$,  
\begin{equation}\label{wco} 
u_\varepsilon\rightharpoonup U\ \mbox{(weakly) in } W_0^{1,\overrightarrow{p}}(\Omega)\quad \mbox{and}\quad 
u_\varepsilon \to U\ \mbox{a.e. in } \Omega\ \mbox{as }\varepsilon\to 0.
\end{equation}
\end{lemma}

\begin{proof} Let $\varepsilon>0$ be arbitrary. From \eqref{sig}, we see that
$\Phi_\varepsilon+\Theta$ satisfies the same assumptions  as $\Theta$ in Section~\ref{newso}. So, Theorem~\ref{het} applies with $\mathcal P_\Theta$ replaced by $\mathcal P_{\Theta,\varepsilon}$, where 
$$\langle  \mathcal P_{\Theta,\varepsilon}(u),v\rangle:=
\int_\Omega \left( \widehat \Theta(u)+\widehat{\Phi}_\varepsilon(u)\right)v\,dx 
\quad \mbox{for every } u,v\in W_0^{1,\overrightarrow{p}}(\Omega).   
$$ 
This means that \eqref{dit2} admits at least a solution $u_\varepsilon\in W_0^{1,\overrightarrow{p}}(\Omega)$ for every $\varepsilon>0$. 

\vspace{0.2cm}
$(a)$ 
By taking $v=u_\varepsilon$ in \eqref{ssdili}, we derive that
\begin{equation} \label{ion2} 
\langle \mathcal Au_\varepsilon+\mathcal P_\Theta
(u_\varepsilon)-\mathfrak{B} u_\varepsilon,u_\varepsilon\rangle +\int_\Omega \widehat{\Phi}_\varepsilon(u_\varepsilon) \,u_\varepsilon\,dx=0.
\end{equation}
Moreover, since $\mathfrak{B}$ is a bounded operator from $W_0^{1,\overrightarrow{p}}(\Omega)$ into its dual, it follows that for some constant $C_0>0$, we have 
$$\|\mathfrak Bu_\varepsilon\|_{W^{-1,\overrightarrow{p}'}(\Omega)}\leq C_0
\quad \mbox{for every } \varepsilon>0.
$$ 
Using \eqref{newlab0}, the coercivity condition in \eqref{ellip} and Young's inequality, we infer that for every $\delta>0$, there exists a constant $C_\delta>0$ such that 
\begin{equation} \label{strani}
\begin{aligned} 
\langle \mathcal Au_\varepsilon+\mathcal P_\Theta
(u_\varepsilon)-\mathfrak{B} u_\varepsilon,u_\varepsilon\rangle  
&\geq \nu_0 \sum_{j=1}^N \|\partial_j u_\varepsilon\|_{L^{p_j}(\Omega)}^{p_j}-(C_0+C_\Theta)\,\|u_\varepsilon\|_{W_0^{1,\overrightarrow{p}}(\Omega)}\\
&\geq (\nu_0-\delta) \sum_{j=1}^N \|\partial_j u_\varepsilon\|_{L^{p_j}(\Omega)}^{p_j}-C_\delta
\end{aligned}
\end{equation}
for every $\varepsilon>0$.  
Thus, using \eqref{ion2} and \eqref{strani}, jointly with \eqref{sig}, we arrive at 
$$ (\nu_0-\delta) \sum_{j=1}^N \|\partial_j u_\varepsilon\|_{L^{p_j}(\Omega)}^{p_j}\leq
(\nu_0-\delta) \sum_{j=1}^N \|\partial_j u_\varepsilon\|_{L^{p_j}(\Omega)}^{p_j}+\int_\Omega \widehat{\Phi}_\varepsilon(u_\varepsilon) \,u_\varepsilon\,dx\leq C_\delta. 
$$
By choosing $\delta\in (0,\nu_0)$, we readily conclude the assertion of 
\eqref{c11}.

\medskip
$(b)$ From \eqref{c11} and the reflexivity of $W_0^{1,\overrightarrow{p}}(\Omega)$,
we infer that, up to a subsequence, $u_\varepsilon$  converges weakly to some $U$ in $ W_0^{1,\overrightarrow{p}}(\Omega)$. Then, we conclude \eqref{wco} by using  
Remark~\ref{an-sob} in the Appendix, which implies that, up to a subsequence, 
$ u_\varepsilon\to U$ (strongly) in $ L^q(\Omega)$ if  
$q\in [1,p^\ast)$ and $ u_\varepsilon\to U$ a.e. in $\Omega$ as $\varepsilon\to 0$. 
\end{proof}

For the remainder of this section, $u_\varepsilon$ and $U$ have the meaning in Lemma~\ref{l1}.

\subsection{Strong convergence of $T_k(u_\varepsilon)$} \label{sect32}
For $v,w\in W_0^{1,\overrightarrow{p}}(\Omega)$ and a.e. $x\in \Omega$, we define
$\mathcal D_{u_\varepsilon}(v,w)(x)$ as in Subsection \ref{prel} in the Appendix, namely, 
\begin{equation} \label{epslom}
\mathcal D_{u_\varepsilon}(v,w)(x):=\sum_{j=1}^N \left[
A_j(x,u_\varepsilon(x),\nabla v(x))-A_j(x,u_\varepsilon(x),\nabla w(x))
\right] \partial_j (v-w)(x). 
\end{equation}
For any fixed integer $k\geq 1$, we obtain $\mathcal D_{u_\varepsilon}(T_k(u_\varepsilon),T_k(U))$ by replacing $v$ and $w$ in \eqref{epslom} by $T_k(u_\varepsilon)$ and $T_k(U)$, respectively. For simplicity, we write $\mathcal D_{\varepsilon,k}(x)$ instead of $\mathcal D_{u_\varepsilon}(T_k(u_\varepsilon),T_k(U))(x)$, that is,
\begin{equation} \label{dep}
\mathcal D_{\varepsilon,k}(x):=\sum_{j=1}^N \left[ A_j(x,u_\varepsilon,\nabla T_k (u_\varepsilon))-
A_j(x,u_\varepsilon,\nabla T_k (U)) \right] \partial_j  (T_k (u_\varepsilon)-T_k (U)).
\end{equation} 

\begin{lemma} \label{arc} 
There exists a subsequence of $\{u_\varepsilon\}$, relabeled $\{u_\varepsilon\}$, such that 
\begin{equation} \label{extra} 
\nabla u_\varepsilon\to \nabla U\ \mbox{a.e. in } \Omega\ \mbox{and } 
T_k (u_\varepsilon)\to T_k (U)\ \mbox{(strongly) in } W_0^{1,\overrightarrow{p}}(\Omega)\ \mbox{as } \varepsilon\to 0
\end{equation} for every integer $k\geq 1$. 
\end{lemma}

\begin{proof}  
Recall that $\{u_\varepsilon\}$ satisfies \eqref{wco} in Lemma~\ref{l1}. 
By a standard diagonal argument, it suffices to show that for every integer $k\geq 1$, there exists a subsequence $\{u_\varepsilon\}$ (depending on $k$ and relabeled $\{u_\varepsilon\}$) satisfying
\begin{equation}
\label{hsn3}
\nabla T_k (u_\varepsilon)\to \nabla T_k (U)\ \mbox{a.e. in } \Omega \quad \mbox{and}\quad 
T_k (u_\varepsilon)\to T_k (U)\ \mbox{(strongly) in } W_0^{1,\overrightarrow{p}}(\Omega).
\end{equation}
Moreover, in light of Lemma~\ref{joc1} in the Appendix, 
we conclude \eqref{hsn3} by showing that, for every integer $k\geq 1$, there exists   a subsequence of $\{u_\varepsilon\}$ (depending on $k$ and relabeled $\{u_\varepsilon\}$) such that 
\begin{equation} \label{zero} \mathcal D_{\varepsilon,k}
\to 0\ \mbox{ in } L^1(\Omega) \ \mbox{as } \varepsilon\to 0.\end{equation}  	

Let $k\geq 1$ be fixed. 	
Clearly, the monotonicity assumption in \eqref{ellip} yields that $\mathcal D_{\varepsilon,k}\geq 0$ a.e. in $\Omega$. Hence, to prove \eqref{zero}, it suffices to show that (up to a subsequence of 
$\{u_\varepsilon\}$), 
\begin{equation} \label{zeroo} \limsup_{\varepsilon\to 0} \int_\Omega 
\mathcal D_{\varepsilon,k}(x)\,dx\leq 0  .\end{equation} 
We define $z_{\varepsilon,k}$ as follows
$$z_{\varepsilon,k}:=T_k (u_\varepsilon)-T_k (U).$$
We observe that 
$$ \partial_j z_{\varepsilon,k}\, \chi_{\{|u_\varepsilon|\geq k\}}=-\partial_j T_k (U) \,   \chi_{\{|u_\varepsilon|\geq k\}}=
-\partial_j U\, \chi_{\{|u_\varepsilon|\geq k\}} \,\chi_{\{|U|<k\}}.
$$
Moreover, we see that 
\begin{equation} \label{fii}  \chi_{\{|u_\varepsilon|\geq k\}} \,\chi_{\{|U|<k\}}\to 0\ \ \mbox{a.e. in } \Omega\ \text{as } \varepsilon\to 0. 
\end{equation}
By the Dominated Convergence Theorem, for every $1\leq j\leq N$, we have 
\begin{equation} \label{dom} 
\partial_j U\, \chi_{\{|u_\varepsilon|\geq k\}} \,\chi_{\{|U|<k\}}\to 0\quad \mbox{(strongly) in } L^{p_j}(\Omega)\ \ \mbox{as } \varepsilon\to 0.\end{equation}
On the other hand, from the growth condition on $A_j$ in \eqref{ellip} and the {\em a priori} estimates in Lemma~\ref{l1}, we infer that $\{A_j(x,u_\varepsilon,\nabla T_k (u_\varepsilon))\}_{\varepsilon}$ and 
$\{A_j(x,u_\varepsilon,\nabla T_k (U))\}_\varepsilon$ are bounded in $L^{p_j'}(\Omega)$ and, hence, up to a subsequence of $\{u_\varepsilon\}$, they
converge weakly in $L^{p_j'}(\Omega)$ for each $1\leq j\leq N$. This, jointly with \eqref{dom}, gives that 
$$  \Xi_{j,\varepsilon,k}(x):= \left[ A_j(x,u_\varepsilon,\nabla T_k (u_\varepsilon))-
A_j(x,u_\varepsilon,\nabla T_k (U)) \right] \partial_j U  \chi_{\{|u_\varepsilon|\geq k\}} \,\chi_{\{|U|<k\}}
$$ converges to $0$ in $L^1(\Omega)$ 
as $\varepsilon\to 0$ for every $1\leq j\leq N$. It follows that 
$$ \int_\Omega \mathcal D_{\varepsilon,k}(x) \,\chi_{\{|u_\varepsilon|\geq k\}}\,dx
=-\sum_{j=1}^N\int_\Omega 
\Xi_{j,\varepsilon,k}(x)\,dx\to 0\quad \text{as }\varepsilon\to 0.
$$ Thus, to conclude \eqref{zeroo}, it remains to show that 
\begin{equation} \label{zer} \limsup_{\varepsilon\to 0} \int_\Omega \mathcal D_{\varepsilon,k}(x)\, \chi_{\{|u_\varepsilon|< k\}}\,dx\leq 0  .\end{equation} 
{\em Proof of \eqref{zer}.} 
We define $\varphi_\lambda:\mathbb R\to \mathbb R$ as follows
$$  \varphi_\lambda(t)=t \exp\,(\lambda t^2) \quad \mbox{for every } t\in \mathbb R.
$$ 
We choose $\lambda=\lambda(k)>0$ large such that $ 4\nu_0^2 \,\lambda>\phi^2(k)$, where $\phi$ appears in the growth assumption on $\Phi$, see \eqref{cond1}. This choice of $\lambda$ ensures that for every $ t\in \mathbb R$ 
\begin{equation} \label{bea}  \lambda t^2-\frac{\phi(k)}{2\nu_0} |t|+\frac{1}{4}>0 \quad  \text{and, hence,}\quad 
 \varphi_\lambda'(t) -\frac{\phi(k)}{\nu_0} \,| \varphi_\lambda(t)| >\frac{1}{2}. 
\end{equation} 
For $v\in W_0^{1,\overrightarrow{p}}(\Omega) $, we define 
\begin{equation*} \label{epi} \mathcal E_{\varepsilon,k} (v)=
\sum_{j=1}^N \int_\Omega A_j(x,u_\varepsilon,\nabla v)\partial_j z_{\varepsilon,k} \left[ \varphi_\lambda'(z_{\varepsilon,k}) 
-\frac{\phi(k)}{\nu_0} \,| \varphi_\lambda(z_{\varepsilon,k})|\right] \chi_{\{|u_\varepsilon|< k\}}\,dx.
\end{equation*}
Returning to the definition of $ \mathcal D_{\varepsilon,k}$ in \eqref{dep} and using \eqref{bea}, we arrive at 
\begin{equation} \label{lig} \frac{1}{2}   \int_\Omega \mathcal D_{\varepsilon,k}(x) \,\chi_{\{|u_\varepsilon|< k\}}\,dx
\leq  \mathcal E_{\varepsilon,k} (T_k (u_\varepsilon))- \mathcal E_{\varepsilon,k} (T_k (U)).
\end{equation}
Since $T_k (u_\varepsilon)=u_\varepsilon$ on the set $\{|u_\varepsilon|<k\}$, in light of \eqref{lig}, we complete the proof of \eqref{zer} by showing that 
\begin{eqnarray} 
& \displaystyle\lim_{\varepsilon\to 0}  \mathcal E_{\varepsilon,k} (T_k (U))=0, \label{lom}\\
&\displaystyle \limsup_{\varepsilon\to 0}  \mathcal E_{\varepsilon,k} (u_\varepsilon)\leq 0.
\label{tir}
\end{eqnarray}

\noindent {\em Proof of \eqref{lom}.} For each $1\leq j\leq N$, the growth condition in \eqref{ellip} gives a nonnegative function $F_j\in L^{p_j'}(\Omega)$ such that on the set $\{|u_\varepsilon|<k\}$, we have 
$|A_j(x,u_\varepsilon,\nabla T_k(U))|\leq F_j$ for every $\varepsilon>0$. Since $|z_{\varepsilon,k}|\leq 2k$, we can find a constant $C_k>0$ such that 
$$ \left| \varphi_\lambda'(z_{\varepsilon,k}) 
-\frac{\phi(k)}{\nu_0} \,| \varphi_\lambda(z_{\varepsilon,k})| \right| \leq C_k.
$$
On the other hand, for each $1\leq j\leq N$, we have 
$$  \partial_j z_{\varepsilon,k} \,\chi_{\{|u_\varepsilon|< k\}}= \partial_j z_{\varepsilon,k}+\partial_j U\,\chi_{\{|U|< k\}}
\chi_{\{|u_\varepsilon|\geq  k\}}.
$$ This, together with \eqref{dom} and the weak convergence of $\partial_j z_{\varepsilon,k}$ to $0$ in $L^{p_j}(\Omega)$ as $\varepsilon\to 0$,
implies that $ \partial_j z_{\varepsilon,k} \,\chi_{\{|u_\varepsilon|< k\}} $ converges weakly to $0$ in $L^{p_j}(\Omega)$ as $\varepsilon\to 0$. Hence, we have  
$$  |\mathcal E_{\varepsilon,k} (T_k (U))|\leq C_k \sum_{j=1}^N\int_{\Omega} F_j \, |\partial_j z_{\varepsilon,k} |\,\chi_{\{|u_\varepsilon|< k\}} \,dx\to 0\quad \mbox{as } \varepsilon\to 0,
$$ which proves \eqref{lom}.

\vspace{0.2cm}
\noindent {\em Proof of \eqref{tir}.} 
From \eqref{wco}, we have 
$$z_{\varepsilon,k}\to 0\ \mbox{a.e. in } \Omega\ \mbox{and}\ 
z_{\varepsilon,k} 
\rightharpoonup  0\ \mbox{(weakly) in } W_0^{1,\overrightarrow{p}}(\Omega) \ \mbox{ as }\varepsilon\to 0.$$ Since $|z_{\varepsilon,k}|\leq 2k$ a.e. in $\Omega$,
we get $\varphi_\lambda(z_{\varepsilon,k})\in W_0^{1,\overrightarrow{p}}(\Omega)\cap L^\infty(\Omega) $. Moreover,  
\begin{equation} \label{ver} \varphi_\lambda(z_{\varepsilon,k})\to 0\ \text{ a.e. in }
\Omega\ \text{and } \varphi_\lambda(z_{\varepsilon,k}) \rightharpoonup  0\ \text{ (weakly) in } W_0^{1,\overrightarrow{p}}(\Omega) \ 
\text{as }\varepsilon\to 0.\end{equation}
Observe that $u_\varepsilon\, z_{\varepsilon,k}\geq 0$ on the set $\{|u_\varepsilon|\geq k\}$, which gives that 
$$ \widehat \Phi_\varepsilon(u_\varepsilon) \,\varphi_\lambda(z_{\varepsilon,k})\,\chi_{\{|u_\varepsilon|\geq k\}}\geq 0. 
$$ 
Thus, by testing \eqref{ssdili} with $v=\varphi_\lambda(z_{\varepsilon,k})$,  
we obtain that 
\begin{equation} \label{nou} 
\langle \mathcal A u_\varepsilon, \varphi_\lambda(z_{\varepsilon,k})\rangle +
\int_\Omega \widehat\Phi_\varepsilon(u_\varepsilon) \,\varphi_\lambda(z_{\varepsilon,k})\,\chi_{\{|u_\varepsilon|<k\}}\,dx\leq 
\langle \mathfrak{B} u_\varepsilon, \varphi_\lambda(z_{\varepsilon,k})\rangle
-\int_\Omega \widehat{\Theta}(u_\varepsilon)\,\varphi_\lambda(z_{\varepsilon,k})\,dx.
\end{equation} 
To simplify exposition, we now introduce some notation:
\begin{equation*} \label{nota}
\begin{aligned}
& X_k(\varepsilon):= \phi(k)  \, \int_\Omega \left[ \frac{1}{\nu_0}\, \sum_{j=1}^N \widehat A_j(u_\varepsilon)\,\partial_j (T_k U) +c(x)\right] |\varphi_\lambda(z_{\varepsilon,k})| \,\chi_{\{|u_\varepsilon|<k\}}\,dx,\\
& Y_k(\varepsilon):= \sum_{j=1}^N \int_\Omega \widehat A_j(u_\varepsilon)\,\partial_j U\, \varphi_\lambda'(z_{\varepsilon,k})\,\chi_{\{|U|<k\}}\,
\chi_{\{|u_\varepsilon|\geq k\} }\,dx.
\end{aligned}
\end{equation*}
We rewrite the first term in the left-hand side of \eqref{nou} as follows
\begin{equation} \label{cop}   \langle \mathcal A u_\varepsilon, \varphi_\lambda(z_{\varepsilon,k})\rangle=
\sum_{j=1}^N \int_\Omega \widehat A_j( u_\varepsilon) \partial_j z_{\varepsilon,k} \,\varphi_\lambda'(z_{\varepsilon,k})\,\chi_{\{|u_\varepsilon|<k\}}\,dx-Y_k(\varepsilon).
\end{equation}
The coercivity condition in \eqref{ellip} and the growth condition of $\Phi$ in \eqref{cond1} imply that 
\begin{equation} \label{loc}  |\widehat \Phi_\varepsilon(u_\varepsilon) |
\,\chi_{\{|u_\varepsilon|<k\}}\leq 
\phi(k) \left[ \frac{1}{\nu_0} \sum_{j=1}^N \widehat A_j(u_\varepsilon) \partial_j u_\varepsilon +c(x)\right] 
\,\chi_{\{|u_\varepsilon|<k\}}. \end{equation} 
In the right-hand side of \eqref{loc} we replace $\partial_j u_\varepsilon$ by $\partial_j z_{\varepsilon,k}+\partial_j T_k (U)$, then we multiply the inequality by
$| \varphi_\lambda(z_{\varepsilon,k})|$ and integrate over $\Omega$ with respect to $x$. It follows that the second term in the left-hand side of \eqref{nou} is at least 
$$  -\frac{\phi(k)}{\nu_0} \sum_{j=1}^N\int_\Omega \widehat A_j(u_\varepsilon) \partial_j z_{\varepsilon,k} \,
| \varphi_\lambda(z_{\varepsilon,k})| \,\chi_{\{|u_\varepsilon|<k\}}\,dx-X_{k}(\varepsilon).
$$ 
Using this fact, as well as \eqref{cop}, in \eqref{nou}, we see that $\mathcal E_{\varepsilon,k}(u_\varepsilon)$  satisfies the estimate
\begin{equation} \label{veb}  \mathcal E_{\varepsilon,k}(u_\varepsilon)\leq X_k(\varepsilon)+Y_k(\varepsilon)+\langle \mathfrak{B} u_\varepsilon, \varphi_\lambda(z_{\varepsilon,k})\rangle-\int_\Omega \widehat{\Theta}(u_\varepsilon)\,\varphi_\lambda(z_{\varepsilon,k})\,dx.
\end{equation} 

\medskip
To conclude the proof of \eqref{tir}, it suffices to show that each term in the right-hand side of \eqref{veb} converges to $0$ as $\varepsilon\to 0$. 
Recall that $\varphi_\lambda(z_{\varepsilon,k})\in W_0^{1,\overrightarrow{p}}(\Omega)\cap L^\infty(\Omega) $ satisfies \eqref{ver}. 
Thus, using \eqref{newlab0} and the property $(P_2)$ of $\mathfrak{B}$, we get that the third, as well as the fourth, term in the right-hand side of \eqref{veb} converges to zero as $\varepsilon\to 0$. 

\noindent We next look at $X_k(\varepsilon)$. In view of the pointwise convergence in \eqref{ver} and $c\in L^1(\Omega)$, we infer from the Dominated Convergence Theorem that 
\begin{equation} \label{alb1} c(x) |\varphi_\lambda(z_{\varepsilon,k})|\,\chi_{\{|u_\varepsilon|<k\}}\to 0 \ \mbox{in } L^1(\Omega)\ \mbox{as } \varepsilon\to 0.\end{equation} 
Next, up to a subsequence of $\{u_\varepsilon\}$, we find that $\widehat A_j(u_\varepsilon) $ converges weakly 
in $L^{p_j'}(\Omega)$ 
as $\varepsilon \to 0$ for every $1\leq j\leq N$ using
the boundedness of $\widehat{A}_j:W_0^{1,\overrightarrow{p}}(\Omega)\to L^{p_j'}(\Omega)$ (see Lemma~\ref{lem-tt00}). Hence,  
$ \sum_{j=1}^N \widehat A_j(u_\varepsilon)\,\partial_j  U $
converges in $L^1(\Omega)$ as $\varepsilon\to 0$. Then, there exists a nonnegative function $F\in L^1(\Omega)$ (independent of $\varepsilon$) such that,  
up to a subsequence of $\{u_\varepsilon\}$, we have 
\begin{equation} \label{fiu}  \left|\sum_{j=1}^N \widehat A_j(u_\varepsilon)\,\partial_j  U\right|\leq F \quad \mbox{a.e. in } \Omega\ \mbox{for every } \varepsilon>0.\end{equation}
We can now again use the Dominated Convergence Theorem to conclude that 
\begin{equation} \label{alb2}  \sum_{j=1}^N \widehat A_j(u_\varepsilon)\, \partial_j T_k (U)\, |\varphi_\lambda(z_{\varepsilon,k})|\, \chi_{\{|u_\varepsilon|<k\}}\to 0
\ \mbox{in } L^1(\Omega)\ \mbox{as } \varepsilon\to 0.  \end{equation} 
From \eqref{alb1} and \eqref{alb2}, we find that $\lim_{\varepsilon\to 0} X_k(\varepsilon)=0$. 
Since $|\varphi_\lambda'(z_{\varepsilon,k})|$ is bounded above by a constant independent of $\varepsilon$ (but dependent on $k$), we can use a similar argument, based on \eqref{fii} and \eqref{fiu}, to obtain that, up to a subsequence of $\{u_\varepsilon\}$, 
$\lim_{\varepsilon\to 0} Y_k(\varepsilon)=0$. This ends the proof of the convergence to zero of the right-hand side of \eqref{veb} as $\varepsilon\to 0$. Consequently, the proof of \eqref{tir}, and thus of \eqref{zer}, is complete.  
\end{proof}

\subsection{Passing to the limit} \label{pass}

From now on, the meaning of $\{u_\varepsilon\}_\varepsilon$ is given by Lemma~\ref{arc}. 
Using Lemma~\ref{l1}, we 
prove in Lemma~\ref{cor2} that $U$ is a solution of \eqref{eq101} with $f=0$ and, moreover, $U$ satisfies all the properties stated in Theorem~\ref{nth2} (i). Besides \eqref{extra}, the other fundamental property that allows us to pass to the limit as
$\varepsilon\to 0$ in \eqref{ssdili} for every $v\in W_0^{1,\overrightarrow{p}}(\Omega)\cap L^\infty(\Omega)$ is the following convergence 
\begin{equation} \label{cov}
\widehat\Phi_{\varepsilon} (u_\varepsilon)\to \widehat\Phi(U)\ \mbox{ (strongly) in } L^1(\Omega)\ \mbox{as } \varepsilon
\to 0.
\end{equation} 
The proof of \eqref{cov} is the main objective of our next result.

\begin{lemma} \label{coro1} We have $\widehat{\Phi}(U) \,U^{j}\in L^1(\Omega)$ for $j=0,1$ and \eqref{cov} holds. 
\end{lemma}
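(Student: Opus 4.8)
The plan is to first establish the $L^1$-bound for the ``good'' quantity $\widehat\Phi_\varepsilon(u_\varepsilon)\,u_\varepsilon$, then upgrade the pointwise a.e.\ convergence of $\widehat\Phi_\varepsilon(u_\varepsilon)$ to strong $L^1$ convergence via a Vitali-type equi-integrability argument, and finally deduce the two integrability claims $\widehat\Phi(U)\in L^1(\Omega)$ and $\widehat\Phi(U)\,U\in L^1(\Omega)$ by Fatou's lemma. Concretely, from Corollary~\ref{cor34} we have $\nabla u_\varepsilon\to\nabla U$ a.e., and $u_\varepsilon\to U$ a.e.\ by \eqref{wco}; since $\Phi_\varepsilon(x,t,\xi)=\Phi(x,t,\xi)/(1+\varepsilon|\Phi(x,t,\xi)|)\to\Phi(x,t,\xi)$ pointwise as $\varepsilon\to 0$ and $\Phi$ is Carath\'eodory, a routine argument gives $\widehat\Phi_\varepsilon(u_\varepsilon)\to\widehat\Phi(U)$ a.e.\ in $\Omega$. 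Combined with the sign condition $\widehat\Phi_\varepsilon(u_\varepsilon)\,u_\varepsilon\ge 0$, Fatou's lemma applied to \eqref{c11} yields $\int_\Omega\widehat\Phi(U)\,U\,dx\le C<\infty$, so $\widehat\Phi(U)\,U\in L^1(\Omega)$; the case $|U|\le 1$ is harmless because there $|\widehat\Phi(U)|\le\zeta(1)(\sum_j|\partial_j U|^{p_j}+c(x))\in L^1(\Omega)$ using \eqref{cond1} and $U\in W_0^{1,\overrightarrow p}(\Omega)$, while on $\{|U|>1\}$ we have $|\widehat\Phi(U)|\le|\widehat\Phi(U)\,U|$, so $\widehat\Phi(U)\in L^1(\Omega)$ as well.

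The heart of the proof is \eqref{cov}, i.e.\ the \emph{strong} $L^1$ convergence $\widehat\Phi_\varepsilon(u_\varepsilon)\to\widehat\Phi(U)$, for which by Vitali's theorem (using the a.e.\ convergence already noted and the finiteness of $\mathrm{meas}(\Omega)$) it suffices to prove that $\{\widehat\Phi_\varepsilon(u_\varepsilon)\}_\varepsilon$ is uniformly integrable over $\Omega$. The plan is to split, for a measurable $E\subset\Omega$ and a parameter $k\ge 1$,
\[
\int_E|\widehat\Phi_\varepsilon(u_\varepsilon)|\,dx=\int_{E\cap\{|u_\varepsilon|<k\}}|\widehat\Phi_\varepsilon(u_\varepsilon)|\,dx+\int_{E\cap\{|u_\varepsilon|\ge k\}}|\widehat\Phi_\varepsilon(u_\varepsilon)|\,dx.
\]
For the second (tail) integral one estimates $|\widehat\Phi_\varepsilon(u_\varepsilon)|\le|\widehat\Phi_\varepsilon(u_\varepsilon)\,u_\varepsilon|/k$ on $\{|u_\varepsilon|\ge k\}$, so that this term is bounded by $C/k$ uniformly in $\varepsilon$ and $E$, using \eqref{c11}. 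For the first (truncated) integral one uses the growth bound \eqref{loc}: on $\{|u_\varepsilon|<k\}$,
\[
|\widehat\Phi_\varepsilon(u_\varepsilon)|\,\chi_{\{|u_\varepsilon|<k\}}\le\zeta(k)\Bigl[\frac{1}{\nu_0}\sum_{j=1}^N\widehat A_j(u_\varepsilon)\,\partial_j u_\varepsilon+c(x)\Bigr]\chi_{\{|u_\varepsilon|<k\}}=\zeta(k)\Bigl[\frac{1}{\nu_0}\sum_{j=1}^N\widehat A_j(u_\varepsilon)\,\partial_j T_k(u_\varepsilon)+c(x)\Bigr],
\]
and here the right-hand side is uniformly integrable: $c\in L^1(\Omega)$ is fixed, while $\sum_j\widehat A_j(u_\varepsilon)\,\partial_j T_k(u_\varepsilon)\to\sum_j A_j(x,U,\nabla T_k(U))\,\partial_j T_k(U)$ in $L^1(\Omega)$ as $\varepsilon\to 0$ — this is exactly the content of \eqref{coni1} from the proof of Step~3 of Lemma~\ref{joc} — so an $L^1$-convergent sequence is equi-integrable. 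Thus, given $\sigma>0$, first fix $k$ so large that $C/k<\sigma/2$, then use the equi-integrability of the truncated piece to find $\rho>0$ such that $\mathrm{meas}(E)<\rho$ forces the first integral below $\sigma/2$; this establishes uniform integrability and hence \eqref{cov}.

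The main obstacle is making the truncated-integral estimate genuinely uniform in $\varepsilon$: one needs that $\{\widehat A_j(u_\varepsilon)\,\partial_j T_k(u_\varepsilon)\}_\varepsilon$ is equi-integrable for each fixed $k$, and the cleanest route is to invoke the strong convergence \eqref{coni1} (equivalently, that $A_j(x,u_\varepsilon,\nabla T_k(u_\varepsilon))\rightharpoonup A_j(x,U,\nabla T_k(U))$ weakly in $L^{p_j'}(\Omega)$ together with $\partial_j T_k(u_\varepsilon)\to\partial_j T_k(U)$ strongly in $L^{p_j}(\Omega)$ from Lemma~\ref{joc}), which gives $L^1$-convergence of the product and therefore equi-integrability. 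An alternative, if one prefers to avoid citing Step~3, is to bound $|\widehat A_j(u_\varepsilon)|\,|\partial_j T_k(u_\varepsilon)|$ by H\"older's inequality using that $\{|\widehat A_j(u_\varepsilon)|^{p_j'}\chi_{\{|u_\varepsilon|<k\}}\}_\varepsilon$ is equi-integrable (from the growth condition \eqref{cond1}, since $|u_\varepsilon|<k$ controls the $|t|^{p^\ast/p_j'}$ term by a constant and the $\eta_j$ and $\sum_\ell|\partial_\ell u_\varepsilon|^{p_\ell}$ terms are equi-integrable because $T_k(u_\varepsilon)\to T_k(U)$ in $W_0^{1,\overrightarrow p}(\Omega)$, so $|\partial_\ell u_\varepsilon|^{p_\ell}\chi_{\{|u_\varepsilon|<k\}}=|\partial_\ell T_k(u_\varepsilon)|^{p_\ell}$ converges in $L^1$); I would go with the first route for brevity. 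Once \eqref{cov} is in hand, the two claimed integrabilities of $\widehat\Phi(U)$ and $\widehat\Phi(U)\,U$ follow from the Fatou argument sketched above, completing the proof.
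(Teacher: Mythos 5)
Your proposal is correct and follows the paper's proof in all essentials: the same Fatou argument gives $\widehat{\Phi}(U)\,U\in L^1(\Omega)$, the same split over $\{|U|\le M\}$ and $\{|U|>M\}$ gives $\widehat{\Phi}(U)\in L^1(\Omega)$, and \eqref{cov} is obtained by Vitali's theorem after splitting over $\{|u_\varepsilon|\le M\}$ and $\{|u_\varepsilon|>M\}$ with the tail controlled by $C/M$ via \eqref{c11}. The only divergence is on the truncated piece, where the paper applies the growth condition \eqref{cond1} directly to get $\zeta(M)\bigl(\sum_j|\partial_j T_M(u_\varepsilon)|^{p_j}+c\bigr)$ and invokes the strong convergence $T_M(u_\varepsilon)\to T_M(U)$ in $W_0^{1,\overrightarrow{p}}(\Omega)$ from Corollary~\ref{cor34} --- exactly your ``alternative'' --- whereas your primary route detours through the coercivity bound \eqref{loc}; that also works, but the $L^1$-convergence you then need is that of $\sum_j\widehat{A}_j(u_\varepsilon)\,\partial_j T_k(u_\varepsilon)$ (i.e.\ of $H_\varepsilon(u_\varepsilon,u_\varepsilon)$, which follows from \eqref{zero} combined with \eqref{coni1}, \eqref{coni2} and the convergence of $H_\varepsilon(U,U)$), not literally \eqref{coni1}, whose multiplier is $\partial_j T_k(U)$.
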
	

\begin{proof}
From the pointwise convergence 
$u_\varepsilon  \to U$ and $  
\nabla u_\varepsilon\to \nabla U$ a.e. in $\Omega$ as $\varepsilon\to 0,$ jointly with the fact that $\Phi(x,t,\xi):\Omega\times \mathbb R\times \mathbb R^N\to \mathbb R$ is a Carath\'eodory function, we infer that 
$\widehat{\Phi}(u_\varepsilon)  
\to \widehat{\Phi}(U)$ and  
$  \widehat{\Phi}_\varepsilon(u_\varepsilon)\,u_\varepsilon\to \widehat{\Phi}(U)\,U$ a.e. in $\Omega$ as $ \varepsilon\to 0$.
Using this fact and that 
$\{\widehat{\Phi}_\varepsilon(u_\varepsilon)\,u_\varepsilon\}_\varepsilon$ is a sequence of nonnegative functions that is uniformly bounded in $L^1(\Omega)$ with respect to $\varepsilon$ (from Lemma~\ref{l1}), by Fatou's Lemma we conclude 
that $$\widehat{\Phi}(U) \,U\in L^1(\Omega).$$ 
This and the growth condition in \eqref{cond1} yield that $\widehat{\Phi}(U)\in L^1(\Omega)$. Indeed, for any $M>0$, on the set $ \Omega\cap \{|U|\leq M\}$, we have 
$|\widehat{\Phi}(U) |\leq \phi(M) \left(\sum_{j=1}^N |\partial_j U|^{p_j}+c(x)\right)\in L^1(\Omega).$ 
In turn, on the set $\Omega\cap \{|U|>M\}$, 
it holds
$ |\widehat{\Phi}(U) |\leq M^{-1}\, \widehat{\Phi}(U)\,U \in L^1(\Omega). $

\medskip
To finish the proof of Lemma~\ref{coro1}, it remains to establish \eqref{cov}. 

\medskip
\noindent {\em Proof of \eqref{cov}.} 
Since 
$\widehat{\Phi}_\varepsilon(u_\varepsilon)\to \widehat{\Phi}(U)$ a.e. in $\Omega$ as $\varepsilon\to 0$ and $\widehat{\Phi}(U)\in L^1(\Omega)$, by Vitali's Theorem, it suffices to show that $\{\widehat{\Phi}_\varepsilon(u_\varepsilon)\}_\varepsilon$ is uniformly integrable over $\Omega$. We next check this fact. For every $M>0$, we define
$$  D_{\varepsilon,M}:=\{ |u_\varepsilon|\leq M\}\ \quad\text{and}\quad E_{\varepsilon,M}:=\{ |u_\varepsilon|> M\}.
$$
For every $x\in D_{\varepsilon,M}$, using the growth condition of $\Phi$ in \eqref{cond1}, we find that 
$$|\widehat{\Phi}_\varepsilon(u_\varepsilon)(x)|\leq 
|\widehat{\Phi}(u_\varepsilon)(x)|\leq
\phi(M) \left( \sum_{j=1}^N |\partial_j T_M (u_\varepsilon)|^{p_j}+c(x)\right),$$ 
with $c \in L^1(\Omega)$. Let $\omega$ be 
any measurable subset  of $\Omega$.  
It follows that  
$$ \int_{\omega\cap D_{\varepsilon,M}} |\widehat{\Phi}_\varepsilon(u_\varepsilon)|\,dx\leq 
\phi(M)\left(\sum_{j=1}^N  \|\partial_j (T_M u_\varepsilon)\|^{p_j}_{L^{p_j}(\omega)}+
\int_{\omega} c(x)\,dx
\right).
$$
On the other hand, using \eqref{c11} in Lemma~\ref{l1}, we see that 
$$  \int_{\omega\cap E_{\varepsilon,M}}  
|\widehat{\Phi}_\varepsilon(u_\varepsilon)|
\,dx\leq  \frac{1}{M} \int_{\omega \cap E_{\varepsilon,M}} \widehat{\Phi}_\varepsilon(u_\varepsilon) \,u_\varepsilon\,dx
\leq \frac{C}{M}, 
$$ where $C>0$ is a constant independent of $\varepsilon$ and $\omega$. 
Consequently, we find that 
\begin{equation}\label{new1}
\int_\omega |\widehat{\Phi}_\varepsilon(u_\varepsilon)|\,dx \leq
\phi(M)\left(\sum_{j=1}^N  \|\partial_j (T_M u_\varepsilon)\|^{p_j}_{L^{p_j}(\omega)}+
\int_{\omega} c(x)\,dx
\right)
+\frac{C}{M}. 
\end{equation}
Lemma~\ref{arc} yields that $\partial_j T_M (u_\varepsilon)\to \partial_j T_M (U)$ (strongly) in $L^{p_j}(\Omega)$ as $\varepsilon\to 0$ for every $1 \le j \le N$. Since $c\in L^1(\Omega)$, from \eqref{new1} we get the uniform integrability of 
$\{\widehat{\Phi}_\varepsilon(u_\varepsilon)\}_\varepsilon$ over $\Omega$. We end the proof of \eqref{cov} by Vitali's Theorem. 
\end{proof}

By Lemma~\ref{coro1}, to finish the proof of Theorem~\ref{nth2} (i), we need to show the following.

\begin{lemma} \label{cor2} 
The function $U$ is a solution of \eqref{eq101} with $f=0$ and, moreover,  \eqref{sesim} holds for $v=u=U$.
\end{lemma}

\begin{proof} Fix $v\in W_0^{1,\overrightarrow{p}}(\Omega)\cap L^\infty(\Omega)$ arbitrary. Since $u_\varepsilon$ is a solution of \eqref{dit2}, we have 
\begin{equation} \label{sis}
\sum_{j=1}^N \int_\Omega \widehat{A}_j(u_\varepsilon)\,\partial_j v\,dx+
\int_\Omega \widehat{\Phi}_\varepsilon(u_\varepsilon)\,v\,dx+
\int_\Omega \widehat{\Theta}(u_\varepsilon)\,v\,dx=\langle \mathfrak{B} u_\varepsilon,v\rangle .
\end{equation}
By Lemma~\ref{coro1}, the second term in the left-hand side of \eqref{sis} converges to 
$\int_\Omega \widehat{\Phi}(U)\,v$ as $\varepsilon\to 0$, whereas the right-hand side of \eqref{sis} converges to $\langle \mathfrak{B} U,v\rangle$ 
based on the weak convergence of $u_\varepsilon$ to $U$ in $W_0^{1,\overrightarrow{p}}(\Omega)$ as $\varepsilon\to 0$. 
Using \eqref{wco} and \eqref{extra},
we find that  
\begin{equation} \label{milop} \widehat{\Theta}(u_\varepsilon)\to \widehat{\Theta}(U)\quad \mbox{and}\quad 
\widehat{A}_j(u_\varepsilon)\to 
\widehat{A}_j(U)\ \mbox{a.e. in } \Omega\ \mbox{for } 1\leq j\le N.
\end{equation}
Thus, in light of \eqref{newlab0}, and the Dominated Convergence Theorem, we obtain that 
$$ \int_\Omega \widehat{\Theta}(u_\varepsilon)\,v\,dx\to  
\int_\Omega \widehat{\Theta}(U)\,v\,dx\quad \mbox{as } \varepsilon\to 0. 
$$ 
Since $\{\widehat{A}_j(u_\varepsilon)\}_\varepsilon$ is uniformly bounded in $L^{p_j'}(\Omega)$ with respect to $\varepsilon$, we observe from \eqref{milop} that (up to a subsequence) 
$\widehat{A}_j(u_\varepsilon) \rightharpoonup 
\widehat{A}_j(U)$ (weakly) in  $L^{p_j'}(\Omega)$ as $\varepsilon\to 0$   
for each $1\leq j\leq N$. It follows that 
\begin{equation*} \label{von} \sum_{j=1}^N \int_\Omega \widehat{A}_j(u_\varepsilon)\,\partial_j v\,dx\to \sum_{j=1}^N 
\int_\Omega \widehat{A}_j(U)\,\partial_j v\,dx \ \ 
\mbox{as } \varepsilon\to 0.
\end{equation*}
By letting $\varepsilon\to 0$ in \eqref{sis}, we conclude that 
\begin{equation} \label{sesi}
\sum_{j=1}^N \int_\Omega \widehat{A}_j(U)\,\partial_j v\,dx+
\int_\Omega \widehat{\Phi}(U)\,v\,dx+ \int_\Omega \widehat{\Theta}(U)\,v\,dx=\langle \mathfrak{B} U,v\rangle 
\end{equation}
for every $v\in W_0^{1,\overrightarrow{p}}(\Omega)\cap L^\infty(\Omega)$. Hence, $U$ is a solution of \eqref{eq101} with $f=0$.  

\medskip
It remains to prove \eqref{sesim} for $v=u=U$.
Since $U$ may not be in $L^\infty(\Omega)$, we cannot directly use $v=U$ in \eqref{sesi}. Nevertheless, 
for every $k>0$, we have $T_k(U)\in W_0^{1,\overrightarrow{p}}(\Omega) \cap L^\infty(\Omega)$. Hence, by taking $v=T_k (U)$ in \eqref{sesi}, we have
\begin{equation} \label{lac}  
\langle \mathcal AU, T_k (U)\rangle + \int_\Omega  \widehat{\Phi}(U)\,T_k (U)\,dx
+ \int_\Omega \widehat{\Theta}(U)\,T_k (U)\,dx=
\langle \mathfrak{B}U,T_k(U)\rangle.
\end{equation}
Notice that $\|T_k (U)\|_{W_0^{1,\overrightarrow{p}}(\Omega)}\leq \|U\|_{W_0^{1,\overrightarrow{p}}(\Omega)}$ for all $k>0$. 
Moreover,  $\partial_j (T_k (U))\to \partial_j U$ a.e. in $\Omega$ as $k\to \infty$, for every $1\leq j\leq N$,  so that 
$  T_k (U)\rightharpoonup U$ (weakly) in $W_0^{1,\overrightarrow{p}}(\Omega)$ as $k\to \infty.$ 
Since $\mathcal A U$ and $\mathfrak{B} U$ belong to $W^{-1,\overrightarrow{p}'}(\Omega)$, it follows that 
\begin{equation*} \label{tas1} 
\lim_{k\to \infty} \langle \mathcal AU, T_k (U)\rangle = \langle \mathcal AU, U\rangle \quad \mbox{and } \quad \lim_{k \to \infty} \langle \mathfrak{B}U,T_k(U)\rangle=\langle \mathfrak{B} U, U\rangle. 
\end{equation*}
Recalling that $\widehat{\Phi}(U)\, U \in L^1(\Omega)$ and \eqref{newlab0} holds, from the Dominated Convergence Theorem, we can pass to the limit $k\to \infty$ in \eqref{lac} to conclude the proof. 
\end{proof}

\section{Proof of the second assertion in Theorem~\ref{nth2}} \label{lastsec}

Suppose for the moment only \eqref{IntroEq0}, \eqref{newlab0}, \eqref{ellip}, and \eqref{cond1}. Let $\mathfrak{B}$ be in the class $\mathfrak{BC}$.  
Overall, to prove Theorem~\ref{nth2} (ii), we follow similar arguments to those developed for proving Theorem~\ref{nth2} (i) in Section~\ref{sec6}. 
But there are several differences that appear when introducing a function $f\in L^1(\Omega)$
in the equation \eqref{eq101}. We first approximate $f$ by a ``nice" function $f_\varepsilon\in L^\infty(\Omega)$ with the properties that 
\begin{equation} \label{nice} |f_\varepsilon|\leq |f|\ \mbox{a.e. in }\Omega\ \mbox{ and } f_\varepsilon\to f\ \mbox{ a.e. in } \Omega\ \mbox{ as }\varepsilon\to 0.\end{equation} Then, by the Dominated Convergence Theorem, we find that 
\begin{equation} \label{conv1} f_\varepsilon\to f \ \mbox{(strongly) in } L^1(\Omega)\ \mbox{ as }\varepsilon\to 0.\end{equation}
For example, for every $\varepsilon>0$, we could take $f_\varepsilon(x):=f(x)/(1+\varepsilon |f(x)|)$ for a.e. $ x\in \Omega$. This approximation is done so that we can apply Theorem~\ref{nth2} (i) for the problem generated by \eqref{eq101} with $f_\varepsilon$ in place of $f$. 
Then such an approximate problem admits at least a solution $U_\varepsilon$, namely, 
\begin{equation} \label{eqn}
\left\{  \begin{aligned} 
&\mathcal A U_\varepsilon +\widehat\Phi(U_\varepsilon)+\widehat\Theta(U_\varepsilon) =
\mathfrak{B}U_\varepsilon+f_\varepsilon\quad \mbox{in } \Omega,\\ 
& U_\varepsilon\in W_0^{1,\overrightarrow{p}}(\Omega), \quad \widehat\Phi(U_\varepsilon)\in L^1(\Omega).
\end{aligned} \right.
\end{equation}
To see this, we observe that $\mathfrak{B}_\varepsilon: W_0^{1,\overrightarrow{p}}(\Omega)\to  W^{-1,\overrightarrow{p}'}(\Omega) $ belongs to the class $\mathfrak{BC}$, where \begin{equation} \label{bep} \langle  \mathfrak{B}_\varepsilon u,v\rangle=\langle  \mathfrak{B} u,v\rangle +\int_\Omega f_\varepsilon \,v\,dx\quad \mbox{for every }
u,v \in W_0^{1,\overrightarrow{p}}(\Omega).
\end{equation} 
By Theorem~\ref{nth2} (i) applied for $\mathfrak{B}_\varepsilon$ instead of $\mathfrak{B}$, we obtain a solution $U_\varepsilon$ for \eqref{eqn}.   
Thus,
\begin{equation} \label{sec}
\sum_{j=1}^N \int_\Omega \widehat{A}_j(U_\varepsilon)\, \partial_j  v\,dx
+\int_\Omega \widehat{\Phi}(U_\varepsilon)\,v\,dx
+\int_\Omega \widehat{\Theta}(U_\varepsilon)\,v\,dx
=\langle \mathfrak{B} U_\varepsilon,v\rangle +\int_\Omega f_\varepsilon\,v\,dx
\end{equation}
for every $v\in W_0^{1,\overrightarrow{p}}(\Omega)\cap L^\infty(\Omega)$. However, unlike Theorem~\ref{nth2} (i), to obtain that $U_\varepsilon$ is uniformly bounded in $W_0^{1,\overrightarrow{p}}(\Omega)$ with respect to $\varepsilon$, we need the following:

(i) $\mathfrak{B}$ to satisfy the extra condition $(P_3)$, that is, $\mathfrak B$ is chosen in the class $\mathfrak{BC}_+$; 

(ii) the additional hypothesis \eqref{info}, which we recall below:  

\noindent there exist positive constants $\tau$ and $\gamma$ such that 
for a.e. $x\in \Omega$ and every $\xi\in \mathbb R^N$
\begin{equation} \label{inf}
|\Phi(x,t,\xi)| \geq \gamma \sum_{j=1}^N |\xi_j|^{p_j} \quad \text{for all } |t|\geq \tau.
\end{equation}  

Without any loss of generality, we can assume $\tau>0$ large such that $\tau\gamma\geq \nu_0$, where $\nu_0$ appears in the coercivity condition of \eqref{ellip}. 

For the rest of this section, besides  \eqref{IntroEq0}, \eqref{newlab0}, \eqref{ellip} and \eqref{cond1}, we also assume (i) and (ii) above. 
To avoid repetition, we understand that all the computations in Section~\ref{sec6} are done here replacing $u_\varepsilon$, $U$ and $\Phi_\varepsilon$ by $U_\varepsilon$, $U_0$ and $\Phi$, respectively. We only stress the differences that appear compared with the developments in Section~\ref{sec6}.

\subsection{{\em A priori} estimates}
In Lemma~\ref{l1} we gave {\em a priori} estimates for the solution $u_\varepsilon$ to \eqref{dit2}, corresponding to the problem \eqref{eq101} with $f=0$ and $\Phi_\varepsilon$ instead of $\Phi$. We next get {\em a priori} estimates for $U_\varepsilon$ solving \eqref{eqn}, that is, \eqref{eq101} with $f_\varepsilon$ instead of $f$. 

\begin{lemma} \label{lem-ad}
Let $U_\varepsilon$ be a solution  of \eqref{eqn}. 

$(a) $ For a positive constant $C$, independent of $\varepsilon$, we have 
\begin{equation}\label{cas}
\|U_\varepsilon\|_{W_0^{1,\overrightarrow{p}}(\Omega)}+ \int_\Omega |\widehat{\Phi}(U_\varepsilon)|\,dx \leq C.
\end{equation}

$(b)$ There exists $U_0\in W_0^{1,\overrightarrow{p}}(\Omega)$ such that, up to a subsequence of $\{U_\varepsilon\}$,  
\begin{equation}\label{wwa} 
U_\varepsilon\rightharpoonup U_0\ \mbox{(weakly) in } W_0^{1,\overrightarrow{p}}(\Omega),\quad 
U_\varepsilon \to U_0\ \mbox{a.e. in } \Omega\ \mbox{as }\varepsilon\to 0.
\end{equation}
\end{lemma}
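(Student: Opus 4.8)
The assertion $(b)$ will follow at once from $(a)$: once $\{U_\varepsilon\}$ is bounded in the reflexive space $W_0^{1,\overrightarrow{p}}(\Omega)$, we extract a weakly convergent subsequence $U_\varepsilon\rightharpoonup U_0$, and the compactness of the embedding $W_0^{1,\overrightarrow{p}}(\Omega)\hookrightarrow L^1(\Omega)$ (Remark~\ref{an-sob}), followed by a further extraction, yields $U_\varepsilon\to U_0$ a.e. in $\Omega$. So the whole work is in $(a)$, and the plan is to test the weak formulation \eqref{sec} for $U_\varepsilon$ with the admissible function $v=T_\tau(U_\varepsilon)\in W_0^{1,\overrightarrow{p}}(\Omega)\cap L^\infty(\Omega)$, where $\tau$ is the constant from \eqref{inf}. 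The coercivity in \eqref{ellip} gives $\sum_{j=1}^N\widehat A_j(U_\varepsilon)\,\partial_j T_\tau(U_\varepsilon)\ge \nu_0\sum_{j=1}^N|\partial_j U_\varepsilon|^{p_j}\,\chi_{\{|U_\varepsilon|<\tau\}}$ a.e.; the sign-condition in \eqref{ellip} gives $\widehat\Phi(U_\varepsilon)\,T_\tau(U_\varepsilon)\ge 0$ and, more precisely, $\widehat\Phi(U_\varepsilon)\,T_\tau(U_\varepsilon)\ge \tau\,|\widehat\Phi(U_\varepsilon)|\,\chi_{\{|U_\varepsilon|\ge\tau\}}$ a.e.; and $|\int_\Omega f_\varepsilon\,T_\tau(U_\varepsilon)\,dx|\le \tau\|f\|_{L^1(\Omega)}$ by \eqref{nice}. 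Thus \eqref{sec} yields
\[
\nu_0\sum_{j=1}^N\int_{\{|U_\varepsilon|<\tau\}}|\partial_j U_\varepsilon|^{p_j}\,dx
+\tau\int_{\{|U_\varepsilon|\ge\tau\}}|\widehat\Phi(U_\varepsilon)|\,dx
\le |\langle\mathfrak{B}U_\varepsilon,T_\tau(U_\varepsilon)\rangle|+\tau\|f\|_{L^1(\Omega)}.
\]
Using the lower bound \eqref{inf} on $\{|U_\varepsilon|\ge\tau\}$ to bound $\tau\int_{\{|U_\varepsilon|\ge\tau\}}|\widehat\Phi(U_\varepsilon)|\,dx$ from below by $\gamma\tau\sum_{j=1}^N\int_{\{|U_\varepsilon|\ge\tau\}}|\partial_j U_\varepsilon|^{p_j}\,dx$, and writing $I_\varepsilon:=\sum_{j=1}^N\|\partial_j U_\varepsilon\|_{L^{p_j}(\Omega)}^{p_j}$, $M_\varepsilon:=\|U_\varepsilon\|_{W_0^{1,\overrightarrow{p}}(\Omega)}$, this becomes
\[
\min\{\nu_0,\gamma\tau\}\,I_\varepsilon\le |\langle\mathfrak{B}U_\varepsilon,T_\tau(U_\varepsilon)\rangle|+\tau\|f\|_{L^1(\Omega)}.
\]

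Next I would estimate the right-hand side via $(P_1)$. Applying \eqref{star1} with $u=U_\varepsilon$, $v=T_\tau(U_\varepsilon)$, and using $\|T_\tau(U_\varepsilon)\|_{L^{\mathfrak{s}}(\Omega)}\le \tau\,|\Omega|^{1/\mathfrak{s}}$ together with $\|T_\tau(U_\varepsilon)\|_{W_0^{1,\overrightarrow{p}}(\Omega)}\le M_\varepsilon$, we get $|\langle\mathfrak{B}U_\varepsilon,T_\tau(U_\varepsilon)\rangle|\le \mathfrak{C}(1+M_\varepsilon^{\mathfrak{b}})(\mathfrak{a}_0 M_\varepsilon+\tau|\Omega|^{1/\mathfrak{s}})$. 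Combining with the previous display gives, for a constant $C$ independent of $\varepsilon$, that $I_\varepsilon\le C(1+M_\varepsilon^{\mathfrak{b}})$ when $\mathfrak{a}_0=0$ and $I_\varepsilon\le C(1+M_\varepsilon^{\mathfrak{b}+1})$ when $\mathfrak{a}_0>0$. On the other hand, since $p_1=\min_{1\le j\le N}p_j$, one has the elementary interpolation $M_\varepsilon\le N(1+I_\varepsilon^{1/p_1})$. Substituting this and using that $\mathfrak{b}/p_1<1/p'<1$ in the case $\mathfrak{a}_0=0$, respectively $(\mathfrak{b}+1)/p_1<1$ in the case $\mathfrak{a}_0>0$ — which are precisely the restrictions imposed on $\mathfrak{b}$ in $(P_1)$ — we arrive at an inequality $I_\varepsilon\le C(1+I_\varepsilon^{\theta})$ with $\theta<1$, and a Young-type absorption forces $I_\varepsilon\le C'$, hence $M_\varepsilon\le C''$, with constants independent of $\varepsilon$.

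Finally, discarding the nonnegative gradient term in the first displayed inequality shows $\int_{\{|U_\varepsilon|\ge\tau\}}|\widehat\Phi(U_\varepsilon)|\,dx\le C$ once $M_\varepsilon$ is controlled; while on $\{|U_\varepsilon|<\tau\}$ the growth condition in \eqref{cond1} and the monotonicity of $\zeta$ give $|\widehat\Phi(U_\varepsilon)|\le \zeta(\tau)(\sum_{j=1}^N|\partial_j U_\varepsilon|^{p_j}+c(x))$, so $\int_{\{|U_\varepsilon|<\tau\}}|\widehat\Phi(U_\varepsilon)|\,dx\le \zeta(\tau)(I_\varepsilon+\|c\|_{L^1(\Omega)})\le C$. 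Adding the two bounds yields \eqref{cas}, and then \eqref{wwa} follows as above. I expect the main obstacle to be the closure of the self-improving estimate for $M_\varepsilon$ in the second paragraph: unlike in Lemma~\ref{l1}, testing with a bounded truncation alone controls only the gradient mass on $\{|U_\varepsilon|<\tau\}$, and it is exactly the coercive lower bound \eqref{inf} on $|\Phi|$ that converts the absorbed quantity $\tau\int|\widehat\Phi(U_\varepsilon)|$ into the missing gradient mass on $\{|U_\varepsilon|\ge\tau\}$; only then do the sharp constraints $\mathfrak{b}+1<p_1$ (resp. $\mathfrak{b}<p_1/p'$) let the loop close. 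Care is also needed to verify that the various quantities ($\widehat\Phi(U_\varepsilon)T_\tau(U_\varepsilon)$, $f_\varepsilon T_\tau(U_\varepsilon)$) are genuinely integrable so that $T_\tau(U_\varepsilon)$ is a legitimate test function in \eqref{sec}.
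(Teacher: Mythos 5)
Your proof is correct and follows the paper's own argument for Lemma~\ref{lem-ad}: test \eqref{sec} with $T_\tau(U_\varepsilon)$, use the coercivity in \eqref{ellip} on $\{|U_\varepsilon|<\tau\}$ and the lower bound \eqref{inf} on $\{|U_\varepsilon|\ge\tau\}$ to recover the full gradient mass, control the $\mathfrak{B}$-term via $(P_1)$, and then split the $L^1$-estimate of $\widehat{\Phi}(U_\varepsilon)$ according to whether $|U_\varepsilon|$ exceeds $\tau$. The only (harmless) deviation is in absorbing the $\mathfrak{B}$-term: you bound $\|T_\tau(U_\varepsilon)\|_{L^{\mathfrak{s}}(\Omega)}$ by the constant $\tau\,\mathrm{meas}(\Omega)^{1/\mathfrak{s}}$ and close with the elementary self-improving inequality $I_\varepsilon\le C(1+I_\varepsilon^{\theta})$, $\theta<1$, whereas the paper keeps $\|U_\varepsilon\|_{L^{\mathfrak{s}}(\Omega)}$ and reuses the coercivity argument of Lemma~\ref{ale} (with the iterated Young inequality of Lemma~\ref{young} when $\mathfrak{a}_0=0$); both routes rely on exactly the restrictions on $\mathfrak{b}$ stated in $(P_1)$.
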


\begin{proof}  
$(a)$
The choice of $f_\varepsilon$ gives that $\|f_\varepsilon\|_{L^1(\Omega)}\leq \|f\|_{L^1(\Omega)}$. 
Let $\tau>0$ be as in \eqref{inf}. 
We have $\partial_j T_\tau(U_\varepsilon)=\chi_{\{|U_\varepsilon|<\tau\}} \,\partial_j U_{\varepsilon}$ a.e. in $ \Omega$ for every $1\leq j\leq N$. 
We now define 
$$  K_{\tau,\varepsilon}:=
\sum_{j=1}^N \int_\Omega \widehat{A}_j(U_\varepsilon)\,\partial_j U_\varepsilon \,\chi_{\{|U_\varepsilon|<\tau\}}\,dx+
\tau\int_\Omega |\widehat{\Phi}(U_\varepsilon)|\,\chi_{\{|U_\varepsilon|\geq \tau\}}\,dx-\langle \mathfrak{B} U_\varepsilon,T_\tau (U_\varepsilon)\rangle. 
$$
By taking $v=T_\tau(U_\varepsilon)\in  
W_0^{1,\overrightarrow{p}}(\Omega)\cap L^\infty(\Omega)$ in \eqref{sec} and using the sign-condition of $\Phi$ in \eqref{cond1}, we obtain that 
\begin{equation} \label{vio}  
K_{\tau,\varepsilon}\leq 
 \tau \left(\|f\|_{L^1(\Omega)}+C_\Theta \,{\rm meas}\,(\Omega)\right).
\end{equation}
By virtue of \eqref{inf} and the coercivity condition in \eqref{ellip}, we see that 
$$  \nu_0 \sum_{j=1}^N \int_\Omega |\partial_j U_\varepsilon|^{p_j} \,\chi_{\{|U_\varepsilon|<\tau\}}\,dx+
\tau\gamma  \sum_{j=1}^N \int_\Omega |\partial_j U_\varepsilon|^{p_j} \,\chi_{\{|U_\varepsilon|\geq \tau\}}\,dx
-\langle \mathfrak{B} U_\varepsilon,T_\tau (U_\varepsilon)\rangle\leq K_{\tau,\varepsilon}.  
$$
By our choice of $\tau$, we have $\tau\gamma>\nu_0$. Then, the above estimates lead to 
\begin{equation*} \label{gaz} \nu_0  \sum_{j=1}^N \int_\Omega |\partial_j U_\varepsilon|^{p_j}\,dx -\langle \mathfrak{B} U_\varepsilon,T_\tau (U_\varepsilon)\rangle\leq  
\tau \left(\|f\|_{L^1(\Omega)}+C_\Theta \,{\rm meas}\,(\Omega)\right).
\end{equation*}
This fact, jointly with the property $(P_3)$, gives the boundedness of $\{U_\varepsilon\}_{\varepsilon>0}$ in $W_0^{1,\overrightarrow{p}}(\Omega)$. Since $\mathfrak B$ is a bounded operator from $W_0^{1,\overrightarrow{p}}(\Omega)$ into its dual, we have 
$|\langle \mathfrak{B}U_\varepsilon,T_\tau (U_\varepsilon)|\leq C_1$, where $C_1$ is a positive constant independent of $\varepsilon$. Using \eqref{vio}, we find that 
\begin{equation} \label{nuc1}  \int_\Omega |\widehat{\Phi}(U_\varepsilon)|\,\chi_{\{|U_\varepsilon|\geq \tau\}}\,dx \leq C_1 \tau^{-1}+ \|f\|_{L^1(\Omega)}+C_\Theta \,{\rm meas}\,(\Omega):=C_2.
\end{equation}
Now, using the growth condition of $\Phi$ in \eqref{cond1}, we obtain a positive constant $C_3$ such that  
$ \int_\Omega |\widehat{\Phi}(U_\varepsilon)|\,\chi_{\{|U_\varepsilon|\leq \tau\}}\,dx \leq C_3$ 
for every $\varepsilon>0$. This completes the proof of \eqref{cas}. 

\medskip 
$(b)$ The assertion  in \eqref{wwa} follows from \eqref{cas} (see the proof of $(b)$ in Lemma~\ref{l1}). 
\end{proof}

\subsection{Strong convergence of $T_k(U_\varepsilon)$} The game plan is closely related to that in Subsection~\ref{sect32}. 
As mentioned before, when adapting the calculations, we need to replace $u_\varepsilon$, $U$ and $\mathfrak{B}$ in Section~\ref{sec6} by $U_\varepsilon$, $U_0$ and $\mathfrak{B}_\varepsilon$, respectively. 
The counterpart of Lemma~\ref{arc} holds so that we obtain the following. 

\begin{lemma} \label{cor42} There exists a subsequence of $\{U_\varepsilon\}_\varepsilon$, 
relabeled $\{U_\varepsilon\}_\varepsilon$, such that 
$$  \nabla U_\varepsilon\to \nabla U_0\ \mbox{a.e. in } \Omega\ \mbox{and } 
T_k (U_\varepsilon)\to T_k (U_0)\ \mbox{(strongly) in } W_0^{1,\overrightarrow{p}}(\Omega)\ \mbox{as } \varepsilon\to 0
$$ for every positive integer $k$. 
\end{lemma}

\begin{proof}
The computations in Subsection~\ref{sect32} can be carried out with $\Phi$ instead of $\Phi_\varepsilon$ since the upper bounds used for $|\Phi_\varepsilon|$ were derived from those satisfied by $|\Phi| $ and the sign-condition of $\Phi$ is the same as for $\Phi_\varepsilon$ (see \eqref{sig}). 
A small change arises in the proof of \eqref{tir} because of the introduction of $f_\varepsilon$ in \eqref{eqn}. Using the definition of $\mathfrak{B}_\varepsilon$ in \eqref{bep}, the inequalities in \eqref{nou} and \eqref{veb} must be read with $\mathfrak{B}_\varepsilon$ instead of $\mathfrak{B}$. 
We note that $\langle \mathfrak{B}_\varepsilon U_\varepsilon,  \varphi_\lambda(z_{\varepsilon,k})\rangle$
is the sum between $\langle \mathfrak{B} U_\varepsilon,  \varphi_\lambda(z_{\varepsilon,k})\rangle$
and $\int_\Omega f_\varepsilon \,\varphi_\lambda(z_{\varepsilon,k})\,dx$.  
The latter term, like the former,  converges to $ 0$ as $\varepsilon\to 0$. The new claim regarding the convergence to zero of
$\int_\Omega f_\varepsilon \,\varphi_\lambda(z_{\varepsilon,k})\,dx$  
follows from the Dominated Convergence Theorem 
using \eqref{nice}, $|\varphi_\lambda(z_{\varepsilon,k})|\leq 2k\exp\,(4\lambda k^2)$ and $\varphi_\lambda(z_{\varepsilon,k})\to 0$ a.e. in $\Omega$ as $\varepsilon\to 0$.  The remainder of the proof of \eqref{tir} carries over easily to our setting. \end{proof}

\subsection{Passing to the limit} \label{limpas} We aim to pass to the limit as $\varepsilon\to 0$ in \eqref{sec} to obtain that $U_0$ is a solution of \eqref{eq101}.  
Since $f_\varepsilon$ satisfies \eqref{conv1} and $U_\varepsilon\rightharpoonup U_0$ (weakly) in 
$W_0^{1,\overrightarrow{p}}(\Omega)$ as $\varepsilon\to 0$, we readily have the 
convergence of the right-hand side of \eqref{sec} to $\langle \mathfrak{B} U_0,v\rangle +\int_\Omega f \,v\,dx$ for every 
$v\in W_0^{1,\overrightarrow{p}}(\Omega)\cap L^\infty(\Omega)$. Moreover, because of the convergence 
$ \nabla U_\varepsilon\to \nabla U_0$ a.e. in $ \Omega$, we can use the same argument as in Lemma~\ref{cor2} 
to deduce that, as $\varepsilon\to 0$, 
\begin{equation*} \label{von2}
\int_\Omega \widehat{\Theta}(u_\varepsilon)\,v\,dx\to  
\int_\Omega \widehat{\Theta}(U_0)\,v\,dx, \quad 
\sum_{j=1}^N \int_\Omega \widehat{A}_j(U_\varepsilon)\,\partial_j v\,dx\to \sum_{j=1}^N \int_\Omega \widehat{A}_j(U_0)\,\partial_j v\,dx 
\end{equation*}
for every $v\in W_0^{1,\overrightarrow{p}}(\Omega)$. 
What is here different compared with Subsection~\ref{pass} is the proof of the convergence
\begin{equation} \label{cov2}
\widehat{\Phi} (U_\varepsilon)\to \widehat{\Phi}(U_0)\ \mbox{ (strongly) in } L^1(\Omega)\ \mbox{as } \varepsilon\to 0.
\end{equation} 
To prove that $U_0$ is a solution of \eqref{eq101}, it remains to justify \eqref{cov2}. 
Instead of Lemma~\ref{coro1}, we establish the following.

\begin{lemma} \label{coro2} We have $\widehat{\Phi}(U_0) \in L^1(\Omega)$ and \eqref{cov2} holds. 
\end{lemma}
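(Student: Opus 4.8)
The plan is to mirror the structure of Lemma~\ref{coro1}, splitting the argument into two parts: first establish $\widehat{\Phi}(U_0)\in L^1(\Omega)$, then upgrade the a.e.\ convergence of $\widehat{\Phi}(U_\varepsilon)$ to $\widehat{\Phi}(U_0)$ to strong $L^1$ convergence via Vitali's Theorem. The a.e.\ convergence $\widehat{\Phi}(U_\varepsilon)\to \widehat{\Phi}(U_0)$ is immediate from Corollary~\ref{cor42} (which gives $\nabla U_\varepsilon\to \nabla U_0$ a.e.), the a.e.\ convergence $U_\varepsilon\to U_0$ in \eqref{wwa}, and the Carath\'eodory property of $\Phi$. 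For $\widehat{\Phi}(U_0)\in L^1(\Omega)$ the natural route is Fatou's Lemma combined with the uniform bound \eqref{cas} from Lemma~\ref{lem-ad}: since $|\widehat{\Phi}(U_\varepsilon)|\to |\widehat{\Phi}(U_0)|$ a.e.\ and $\int_\Omega |\widehat{\Phi}(U_\varepsilon)|\,dx\leq C$ uniformly in $\varepsilon$, Fatou immediately yields $\int_\Omega |\widehat{\Phi}(U_0)|\,dx\leq C$.

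For the uniform integrability of $\{\widehat{\Phi}(U_\varepsilon)\}_\varepsilon$, I would reuse the splitting across the sets $D_{\varepsilon,M}=\{|U_\varepsilon|\leq M\}$ and $E_{\varepsilon,M}=\{|U_\varepsilon|>M\}$, as in Lemma~\ref{coro1}. On $D_{\varepsilon,M}$, the growth condition in \eqref{cond1} gives the pointwise bound $|\widehat{\Phi}(U_\varepsilon)|\leq \zeta(M)\big(\sum_j |\partial_j T_M(U_\varepsilon)|^{p_j}+c(x)\big)$, so for any measurable $\omega\subseteq\Omega$,
\begin{equation*}
\int_{\omega\cap D_{\varepsilon,M}} |\widehat{\Phi}(U_\varepsilon)|\,dx\leq \zeta(M)\Big(\sum_{j=1}^N \|\partial_j(T_M U_\varepsilon)\|_{L^{p_j}(\omega)}^{p_j}+\int_\omega c(x)\,dx\Big).
\end{equation*}
By Corollary~\ref{cor42}, $\partial_j T_M(U_\varepsilon)\to \partial_j T_M(U_0)$ strongly in $L^{p_j}(\Omega)$, so $\{|\partial_j T_M(U_\varepsilon)|^{p_j}\}_\varepsilon$ is uniformly integrable; together with $c\in L^1(\Omega)$ this controls the $D_{\varepsilon,M}$ part. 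The genuinely new issue is the contribution over $E_{\varepsilon,M}$. In Lemma~\ref{coro1} one used $|\widehat{\Phi}_\varepsilon(u_\varepsilon)|\leq M^{-1}\widehat{\Phi}_\varepsilon(u_\varepsilon)\,u_\varepsilon$ together with $\int_\Omega \widehat{\Phi}_\varepsilon(u_\varepsilon)\,u_\varepsilon\,dx\leq C$; here there is no such energy bound, but instead \eqref{cas} directly gives $\int_\Omega |\widehat{\Phi}(U_\varepsilon)|\,dx\leq C$ uniformly. Hence on $E_{\varepsilon,M}$ one has the cruder but sufficient estimate: the total mass $\int_\Omega|\widehat{\Phi}(U_\varepsilon)|\,dx$ is uniformly bounded, and moreover I expect to argue that $\int_{\{|U_\varepsilon|>M\}}|\widehat{\Phi}(U_\varepsilon)|\,dx$ is small uniformly in $\varepsilon$ for $M$ large, using that the sets $\{|U_\varepsilon|>M\}$ have measure tending to $0$ as $M\to\infty$ uniformly in $\varepsilon$ (from the uniform bound on $\|U_\varepsilon\|_{W_0^{1,\overrightarrow{p}}(\Omega)}$, hence in $L^{p^\ast}(\Omega)$, via Chebyshev) and that $\{\widehat{\Phi}(U_\varepsilon)\}$ is equi-integrable on these sets. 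Actually the cleaner path: the a.e.\ convergence $\widehat\Phi(U_\varepsilon)\to\widehat\Phi(U_0)$ with $\widehat\Phi(U_0)\in L^1$, plus the fact that the $L^1$ norms converge --- which would follow if we could show $\int_\Omega|\widehat\Phi(U_\varepsilon)|\,dx\to\int_\Omega|\widehat\Phi(U_0)|\,dx$ --- would give strong convergence by a Brezis--Lieb / Scheff\'e type argument; but establishing the convergence of norms is itself roughly equivalent to what we want, so I would instead stick with the direct Vitali verification.

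Concretely, the sequence of steps is: (1) record the a.e.\ convergence $\widehat{\Phi}(U_\varepsilon)\to \widehat{\Phi}(U_0)$ from Corollary~\ref{cor42}, \eqref{wwa}, and the Carath\'eodory property; (2) apply Fatou's Lemma with \eqref{cas} to get $\widehat{\Phi}(U_0)\in L^1(\Omega)$; (3) fix $M>0$, split an arbitrary measurable $\omega$ into $\omega\cap D_{\varepsilon,M}$ and $\omega\cap E_{\varepsilon,M}$, bound the first piece by $\zeta(M)\big(\sum_j\|\partial_j T_M U_\varepsilon\|_{L^{p_j}(\omega)}^{p_j}+\int_\omega c\big)$, and bound $\int_{E_{\varepsilon,M}}|\widehat\Phi(U_\varepsilon)|\,dx$ uniformly in $\varepsilon$ by showing it is $o(1)$ as $M\to\infty$ --- I would obtain this by noting $\widehat\Phi(U_\varepsilon)\chi_{E_{\varepsilon,M}}\to\widehat\Phi(U_0)\chi_{\{|U_0|\geq M\}}$ a.e.\ and invoking Vitali again, or more elementarily by writing $\int_{E_{\varepsilon,M}}|\widehat\Phi(U_\varepsilon)|\,dx\leq\int_\Omega|\widehat\Phi(U_\varepsilon)|\,dx - \int_{D_{\varepsilon,M}}|\widehat\Phi(U_\varepsilon)|\,dx$ and using the a.e.\ convergence plus Fatou on the subtracted term; (4) combine to get $\sup_\varepsilon\int_\omega|\widehat\Phi(U_\varepsilon)|\,dx$ small whenever $\mathrm{meas}(\omega)$ is small, which is uniform integrability; (5) conclude \eqref{cov2} by Vitali's Theorem. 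The main obstacle is step~(3)'s tail estimate over $E_{\varepsilon,M}$: without the energy identity $\int\widehat\Phi(U_\varepsilon)\,U_\varepsilon\,dx\leq C$ available in Theorem~\ref{nth}, one must squeeze the needed smallness purely out of the $L^1$ bound \eqref{cas} plus the a.e.\ convergence, and care is needed to make the bound genuinely uniform in both $\varepsilon$ and $\omega$.
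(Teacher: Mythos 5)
Your steps (1), (2), and the treatment of $\omega\cap D_{\varepsilon,M}$ coincide with the paper's proof. The gap is exactly where you suspect it: the tail $\int_{E_{\varepsilon,M}}|\widehat{\Phi}(U_\varepsilon)|\,dx$ cannot be made small, uniformly in $\varepsilon$, ``purely out of the $L^1$ bound \eqref{cas} plus the a.e.\ convergence.'' Both mechanisms you offer fail. Invoking Vitali on $\widehat{\Phi}(U_\varepsilon)\chi_{E_{\varepsilon,M}}$ is circular, since Vitali presupposes the uniform integrability you are trying to establish. The subtraction argument with Fatou gives only
\begin{equation*}
\limsup_{\varepsilon\to 0}\int_{E_{\varepsilon,M}}|\widehat{\Phi}(U_\varepsilon)|\,dx\ \leq\ \limsup_{\varepsilon\to 0}\int_\Omega|\widehat{\Phi}(U_\varepsilon)|\,dx\ -\ \int_{\{|U_0|<M\}}|\widehat{\Phi}(U_0)|\,dx,
\end{equation*}
and as $M\to\infty$ the right-hand side tends to $\limsup_\varepsilon\|\widehat{\Phi}(U_\varepsilon)\|_{L^1}-\|\widehat{\Phi}(U_0)\|_{L^1}$, which is precisely the possible loss of mass by concentration and need not vanish (compare $g_\varepsilon=g+\varepsilon^{-1}\chi_{A_\varepsilon}$ with $\mathrm{meas}(A_\varepsilon)=\varepsilon$: bounded in $L^1$, convergent a.e., not uniformly integrable). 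Likewise, smallness of $\mathrm{meas}(E_{\varepsilon,M})$ helps only if you already have equi-integrability.

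The missing idea is to go back to the equation. The paper takes $v=T_1(G_{M-1}(U_\varepsilon))\in W_0^{1,\overrightarrow{p}}(\Omega)\cap L^\infty(\Omega)$ as a test function in \eqref{sec}; since $v=\pm 1$ on $\{|U_\varepsilon|>M\}$ and $\Phi$ has the sign condition while $\mathcal{A}$ is coercive, this yields
\begin{equation*}
\int_\Omega |\widehat{\Phi}(U_\varepsilon)|\,\chi_{\{|U_\varepsilon|>M\}}\,dx\ \leq\ \int_\Omega |f_\varepsilon|\,\chi_{\{|U_\varepsilon|\geq M-1\}}\,dx+\langle \mathfrak{B}U_\varepsilon,T_1(G_{M-1}(U_\varepsilon))\rangle,
\end{equation*}
and then $(P_2)$, \eqref{nice} and the weak convergence $T_1(G_{M-1}(U_\varepsilon))\rightharpoonup T_1(G_{M-1}(U_0))$ let one pass to the $\limsup$ in $\varepsilon$; the resulting bound $\int_\Omega|f|\chi_{\{|U_0|\geq M-1\}}\,dx+\langle\mathfrak{B}U_0,T_1(G_{M-1}(U_0))\rangle$ tends to $0$ as $M\to\infty$ because $f\in L^1(\Omega)$ and $\partial_j T_1(G_{M-1}(U_0))=\chi_{\{M-1<|U_0|<M\}}\partial_j U_0$. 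This is the Boccardo--Gallou\"et--Murat device from \cite{BOG}, and it is the substitute for the energy bound $\int\widehat{\Phi}_\varepsilon(u_\varepsilon)u_\varepsilon\,dx\leq C$ used in Lemma~\ref{coro1}. Without some such use of the equation your step (3) does not close.
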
	

\begin{proof} From Lemma~\ref{cor42}, the pointwise convergence in \eqref{wwa} and the continuity of $\Phi(x,\cdot,\cdot)$ in the last two variables, we infer that
$|\widehat{\Phi}(U_\varepsilon)|\to |\widehat{\Phi}(U_0)|\ \mbox{ a.e. in } \Omega\ \mbox{ as }\varepsilon\to 0.
$
Then, \eqref{cas} and  Fatou's Lemma ensure that $\widehat{\Phi}(U_0) \in L^1(\Omega)$.

\medskip
\noindent {\em Proof of \eqref{cov2}.} 
We will use Vitali's Theorem. To this end,  we need to show that 
$\{\widehat{\Phi}(U_\varepsilon)\}_\varepsilon$ is uniformly integrable over $\Omega$. 
We can only partially imitate the proof of the uniform integrability of 
$\{\widehat{\Phi}_\varepsilon(u_\varepsilon)\}_\varepsilon$ in Lemma~\ref{coro1}. 
Fix $M>1$ arbitrary. For any 
measurable subset $\omega$ of $\Omega$, 
using the growth condition of $\Phi$ in \eqref{cond1}, we find that 
\begin{equation} \label{mini} \int_{\omega} |\widehat{\Phi }(U_\varepsilon)| \,\chi_{\{|U_\varepsilon|\leq M\}} \,dx\leq 
\phi(M)\left(\sum_{j=1}^N  \|\partial_j T_M (U_\varepsilon)\|^{p_j}_{L^{p_j}(\omega)}+
\|c\|_{L^1(\omega)}
\right).
\end{equation}
Since $\partial_j T_M (U_\varepsilon)\to \partial_j T_M( U_0)$ (strongly) in $L^{p_j}(\Omega)$ as $\varepsilon\to 0$ for every $1\leq j\leq N$ and $c\in L^1(\Omega)$, we see that the right-hand side of \eqref{mini} is as small as desired uniformly in $\varepsilon$ when the measure of 
$\omega$ is small. 

\medskip
\noindent We next bound from above $\int_{\omega} |\widehat{\Phi }(U_\varepsilon)| \,\chi_{\{|U_\varepsilon|> M\}} \,dx$. This is where the modification
appears since we don't have anymore that $\{ \widehat{\Phi }(U_\varepsilon) \,U_\varepsilon\}_\varepsilon$ is uniformly bounded in $L^1(\Omega)$ with respect to $\varepsilon$. We adapt an approach from \cite{BOG}.
In \eqref{sec} we take 
$$v=T_1(G_{M-1}(U_\varepsilon)) \in W_0^{1,\overrightarrow{p}}(\Omega)\cap L^\infty(\Omega).$$  Then, using \eqref{newlab0}, the coercivity condition in \eqref{ellip} and the sign-condition of $\Phi$ in \eqref{cond1}, we obtain the estimate 
\begin{equation} \label{vio2}  
\int_\Omega |\widehat{\Phi}(U_\varepsilon)|
\chi_{\{|U_\varepsilon|> M\}}\,dx\leq \int_\Omega (|f_\varepsilon|+C_\Theta)\, \chi_{\{|U_\varepsilon|\geq M-1\}}\,dx+
|\langle \mathfrak{B} U_\varepsilon, T_1(G_{M-1} (U_\varepsilon))\rangle |.
\end{equation}
Now, up to a subsequence of $\{U_\varepsilon\}$, from \eqref{wwa}, we have
$$ T_1(G_{M-1} (U_\varepsilon))\rightharpoonup  
T_1(G_{M-1} (U_0))\ \mbox{(weakly) in } W_0^{1,\overrightarrow{p}}(\Omega)\ \mbox{as } \varepsilon\to 0.
$$ Using this in \eqref{vio2}, jointly with \eqref{nice} and the property $(P_2)$ for $\mathfrak{B}$, we find that
$$ \limsup_{\varepsilon\to 0}
\int_\Omega |\widehat{\Phi}(U_\varepsilon)|
\chi_{\{|U_\varepsilon|> M\}}\,dx\leq 
\int_\Omega (|f|+C_\Theta)\,\chi_{\{ |U_0|\geq M-1\}}\,dx+ 
|\langle \mathfrak{B} U_0, T_1(G_{M-1} (U_0))\rangle |.
$$ 
Recall that $f\in L^1(\Omega)$. 
Since 
$\partial_j \, T_1(G_{M-1}(U_0))=\chi_{\{M-1<|U_0|<M\}} \,\partial_j U_{0}$ a.e. in $\Omega$ for every $1\leq j\leq N$, from the above inequality, we infer that 
$$  \int_{\omega} |\widehat{\Phi }(U_\varepsilon)| \,\chi_{\{|U_\varepsilon|> M\}} \,dx
$$ is small, uniformly in $\varepsilon$ and $\omega$, when $M$ is sufficiently large. Thus, using the comments after \eqref{mini}, we conclude the uniform integrability of $\{\widehat{\Phi}(U_\varepsilon)\}_\varepsilon$ over $\Omega$. The proof of Lemma~\ref{coro2} is complete. \end{proof}

By letting $\varepsilon\to 0$ in \eqref{sec}, we conclude that $U_0$ is a solution of \eqref{eq101}. This ends the proof of Theorem~\ref{nth2} (ii).\qed

\section{Strong convergence of $u_\varepsilon$ in Theorem~\ref{nth2} (i)}
\label{stil} 
We show that in the setting of Theorem~\ref{nth2} (i), up to a subsequence of $\{u_\varepsilon\}$, 
not only the assertions of Lemma~\ref{arc} hold, but also the strong convergence in \eqref{oil}, that is
\begin{equation} 
\label{ola}
u_\varepsilon\to U\ \mbox{(strongly) in } W_0^{1,\overrightarrow{p}}(\Omega) \ \ \mbox{as } \varepsilon\to 0.
\end{equation}
\begin{lemma} \label{stron} 
Up to a subsequence of $\{u_\varepsilon\}_\varepsilon$, 
relabeled $\{u_\varepsilon\}_\varepsilon$, we have \eqref{ola}.
\end{lemma}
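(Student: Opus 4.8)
The plan is to combine the strong convergence of the truncations $T_k(u_\varepsilon)$ from Corollary~\ref{cor34} with the tail estimate from Lemma~\ref{use}, letting $k\to\infty$ at the end. First I would write, for each $1\le j\le N$, the splitting
\[
\|\partial_j u_\varepsilon-\partial_j U\|_{L^{p_j}(\Omega)}\le \|\partial_j T_k(u_\varepsilon)-\partial_j T_k(U)\|_{L^{p_j}(\Omega)}+\|\partial_j G_k(u_\varepsilon)\|_{L^{p_j}(\Omega)}+\|\partial_j G_k(U)\|_{L^{p_j}(\Omega)},
\]
which is valid because $\partial_j u_\varepsilon=\partial_j T_k(u_\varepsilon)+\partial_j G_k(u_\varepsilon)$ a.e. in $\Omega$, and similarly for $U$. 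By Corollary~\ref{cor34}, the first term on the right tends to $0$ as $\varepsilon\to0$ (for every fixed $k$), so that
\[
\limsup_{\varepsilon\to0}\|\partial_j u_\varepsilon-\partial_j U\|_{L^{p_j}(\Omega)}\le \limsup_{\varepsilon\to0}\|\partial_j G_k(u_\varepsilon)\|_{L^{p_j}(\Omega)}+\|\partial_j G_k(U)\|_{L^{p_j}(\Omega)}.
\]
By Lemma~\ref{use}, after passing to a further subsequence, the first term on the right-hand side is bounded by $\bigl(\nu_0^{-1}\langle \mathfrak{B}U,G_k(U)\rangle\bigr)^{1/p_j}$.

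Next I would let $k\to\infty$. Since $G_k(U)\to 0$ a.e. in $\Omega$ and $\partial_j G_k(U)=\partial_j U\,\chi_{\{|U|>k\}}\to 0$ a.e., with $|\partial_j G_k(U)|\le|\partial_j U|\in L^{p_j}(\Omega)$, the Dominated Convergence Theorem gives $G_k(U)\to 0$ strongly in $W_0^{1,\overrightarrow{p}}(\Omega)$ as $k\to\infty$; in particular $\|\partial_j G_k(U)\|_{L^{p_j}(\Omega)}\to0$. Because $\mathfrak{B}U\in W^{-1,\overrightarrow{p}'}(\Omega)$, this also forces $\langle \mathfrak{B}U,G_k(U)\rangle\to0$ as $k\to\infty$. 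Feeding these two facts back into the displayed $\limsup$ estimate (whose left-hand side does not depend on $k$) yields $\limsup_{\varepsilon\to0}\|\partial_j u_\varepsilon-\partial_j U\|_{L^{p_j}(\Omega)}=0$ for each $j$, hence $\|u_\varepsilon-U\|_{W_0^{1,\overrightarrow{p}}(\Omega)}\to0$ along the chosen subsequence, which is exactly \eqref{oil}.

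The only delicate point is the bookkeeping of subsequences: Lemma~\ref{use} and Corollary~\ref{cor34} each pass to subsequences, and the subsequence in Lemma~\ref{use} a priori depends on $k$. I would handle this with a standard diagonal argument — first fix the subsequence furnished by Corollary~\ref{cor34}, then for $k=1,2,\dots$ successively extract (nested) subsequences along which the conclusion of Lemma~\ref{use} holds, and finally take the diagonal subsequence, which works simultaneously for every $k$. With that subsequence fixed, the two displayed inequalities above hold for all $k$, and letting $k\to\infty$ completes the argument. I expect this diagonal extraction, rather than any analytic estimate, to be the main (purely technical) obstacle; everything else reduces to Corollary~\ref{cor34}, Lemma~\ref{use}, and the Dominated Convergence Theorem.
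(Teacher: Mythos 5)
Your proposal is correct and follows essentially the same route as the paper: the same splitting of $u_\varepsilon-U$ into truncation and tail parts, the same diagonal extraction combining Corollary~\ref{cor34} with Lemma~\ref{use}, and the limit $k\to\infty$ using $\langle\mathfrak{B}U,G_k(U)\rangle\to0$. The only cosmetic difference is that you control $\|G_k(U)\|_{W_0^{1,\overrightarrow{p}}(\Omega)}$ directly by dominated convergence, whereas the paper bounds it by weak lower semicontinuity against $\liminf_{\varepsilon\to0}\|G_k(u_\varepsilon)\|_{W_0^{1,\overrightarrow{p}}(\Omega)}$ and hence by the same right-hand side as in \eqref{duo}; both are valid.
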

\begin{proof}
For every $k\geq 1$, we define 
\begin{equation} \label{lek}  L_k
:=\nu_0^{-1} \left[ |\langle \mathfrak{B} U,G_k(U)\rangle | +C_\Theta\|G_k(U)\|_{L^1(\Omega)}\right]. 
\end{equation}
We next show that, up to a subsequence of $\{u_\varepsilon\}$, we have 
\begin{equation}
\label{duo}
\limsup_{\varepsilon\to 0} \|G_k(u_\varepsilon)\|_{W_0^{1,\overrightarrow{p}}(\Omega)}\leq 
\sum_{j=1}^N L_k ^{1/p_j}.
\end{equation}

\medskip
\noindent \emph{Proof of \eqref{duo}.} Let $k\geq 1$ be a fixed integer.
Since $G_k(u_\varepsilon)=u_\varepsilon-T_k(u_\varepsilon)$ and
$\partial_j T_k(u_\varepsilon)=\partial_j u_\varepsilon \,\chi_{\{|u_\varepsilon|<k\}}$ for every $1\leq j\leq N,$
from the coercivity assumption in \eqref{ellip}, we see that
$$ \begin{aligned}
\langle \mathcal A u_\varepsilon,G_k(u_\varepsilon)\rangle&= \sum_{j=1}^N 
\int_{\{|u_\varepsilon|>k\}} \widehat A_j(u_\varepsilon) \,\partial_j u_\varepsilon\,dx\\
&\geq 
\nu_0 \sum_{j=1}^N \int_{\{|u_\varepsilon|>k\}} |\partial_j u_\varepsilon|^{p_j}\,dx=
\nu_0 \sum_{j=1}^N \|\partial_j G_k(u_\varepsilon)\|^{p_j}_{L^{p_j}(\Omega)}.
\end{aligned}$$
Using \eqref{sig} and $t \,G_k(t)\geq 0$ for every $t\in \mathbb R$, we observe that $G_k(t)\,\widehat{\Phi}_\varepsilon(t)\geq 0$ for all $t\in \mathbb R$. Then, by testing \eqref{ssdili} with $v=G_k(u_\varepsilon)$ and using \eqref{newlab0}, we find that 
$$  \begin{aligned}
\langle \mathcal A u_\varepsilon,G_k(u_\varepsilon)\rangle
&\leq 
\langle \mathcal A u_\varepsilon,G_k(u_\varepsilon)\rangle+\int_\Omega G_k(u_\varepsilon)\,\widehat{\Phi}_\varepsilon(u_\varepsilon)\,dx
\\
&\leq  | \langle \mathfrak B u_\varepsilon,G_k(u_\varepsilon)\rangle |+
C_\Theta \int_\Omega |G_k(u_\varepsilon)|\,dx.
\end{aligned}$$ 
From \eqref{wco}, the boundedness of $\{u_\varepsilon\}$ in $W_0^{1,\overrightarrow{p}}(\Omega)$ and Remark~\ref{an-sob}, we can pass to a subsequence of $\{u_\varepsilon\}$ (relabeled $\{u_\varepsilon\}$) such that, as $\varepsilon\to 0$, we have 
\begin{equation*} \label{tru}
\begin{aligned}
& T_k(u_\varepsilon)\to T_k(U)\ \text{a.e. in }\Omega\ \text{and } 
T_k(u_\varepsilon) \rightharpoonup  T_k(U)\ \text{(weakly) in } W_0^{1,\overrightarrow{p}}(\Omega),\\
& G_k(u_\varepsilon)\to G_k(U)\ \text{a.e. in }\Omega\ \text{and } 
G_k(u_\varepsilon) \rightharpoonup  G_k(U)\ \text{(weakly) in } W_0^{1,\overrightarrow{p}}(\Omega),\\
& G_k(u_\varepsilon) \to  G_k(U)\ \mbox{strongly in }L^{r}(\Omega)\ \mbox{with }1\leq r<p^\ast.
\end{aligned} \end{equation*}
Hence, using the property $(P_2)$, we derive that 
$$  \lim_{\varepsilon\to 0} \langle \mathfrak B u_\varepsilon,G_k(u_\varepsilon)\rangle = \langle \mathfrak B U,G_k(U)\rangle
\quad \mbox{and}\quad \lim_{\varepsilon\to 0} \|G_k(u_\varepsilon)\|_{L^1(\Omega)}= \|G_k(U)\|_{L^1(\Omega)}.
$$
Consequently, for every $1\leq j\leq N$, we have 
$$  \limsup_{\varepsilon \to 0} \|\partial_j (G_k(u_\varepsilon))\|_{L^{p_j}(\Omega)}\leq \left(
\nu_0^{-1}\left[ |\langle \mathfrak{B} U,G_k(U)\rangle  |+C_\Theta\|G_k(U)\|_{L^1(\Omega)}\right]\right) ^{1/p_j}=L_k^{1/p_j}.
$$ 
This establishes the inequality in \eqref{duo}.

\medskip
Recall that $\{u_\varepsilon\}_\varepsilon$ stands for a sequence $\{u_{\varepsilon_\ell}\}_{\ell\geq 1}$ with $\varepsilon_\ell\searrow 0$ as $\ell\to \infty$.  
By Lemma~\ref{l1} and \eqref{duo}, as well as from the proof of  Lemma~\ref{arc}, we get that 
for any given integer $k\geq 1$, there exists a subsequence of 
$\{u_\varepsilon\}_\varepsilon$ that depends on $k$, say $\{u_{\varepsilon_\ell}^{(k)} \}_{\ell\geq 1}$, for which 
\eqref{duo} and \eqref{hsn3} hold with $u_{\varepsilon_\ell}^{(k)}$ in place of $\{u_\varepsilon\}$. This means that 
\begin{equation}
\label{tric}
\limsup_{\ell\to \infty} \|G_k(u_{\varepsilon_\ell}^{(k)})\|_{W_0^{1,\overrightarrow{p}}(\Omega)}\leq \sum_{j=1}^N 
L_k ^{1/p_j},\qquad
\lim_{\ell\to \infty} \|T_k (u_{\varepsilon_\ell}^{(k)}) -T_k (U)\|_{W_0^{1,\overrightarrow{p}}(\Omega)}= 0.
\end{equation} 
We proceed inductively with respect to $k$, at each step $(k+1)$ selecting the subsequence $\{u_{\varepsilon_\ell}^{(k+1)} \}_{\ell\geq 1}$ from 
$\{u_{\varepsilon_\ell}^{(k)} \}_{\ell\geq 1}$, the subsequence of $\{u_\varepsilon\}$ with the properties in \eqref{tric}. Then, $\{u_{\varepsilon_\ell}^{(\ell)}\}_{\ell\geq k}$ is a subsequence of  $\{u_{\varepsilon_\ell}^{(j)} \}_{\ell\geq 1}$ for every $1\leq j\leq k$.
Hence, by a standard diagonal argument, there exists a subsequence 
of $\{u_\varepsilon\}_\varepsilon$, that is, $\{u_{\varepsilon_\ell}^{(\ell)}\}_{\ell}$,    
relabeled $\{u_\varepsilon\}_\varepsilon$, such that \eqref{duo} and \eqref{hsn3} hold for every $k\geq 1$, namely
\begin{equation}
\label{tri}
\limsup_{\varepsilon\to 0} \|G_k(u_\varepsilon)\|_{W_0^{1,\overrightarrow{p}}(\Omega)}\leq \sum_{j=1}^N
L_k ^{1/p_j},\qquad
\lim_{\varepsilon\to 0} \|T_k (u_\varepsilon) -T_k (U)\|_{W_0^{1,\overrightarrow{p}}(\Omega)}= 0.
\end{equation} 
Using the weak convergence of $G_k(u_\varepsilon)$ to $ G_k(U)$
in $W_0^{1,\overrightarrow{p}}(\Omega)$ as $\varepsilon\to 0$, we see that
\begin{equation} \label{folp}
\|G_k(U)\|_{W_0^{1,\overrightarrow{p}}(\Omega)} \leq 
\liminf_{\varepsilon\to 0} \| G_k(u_\varepsilon)\|_{W_0^{1,\overrightarrow{p}}(\Omega)}
\leq 
\sum_{j=1}^N L_k ^{1/p_j}.
\end{equation}
We now complete the proof of \eqref{ola}. From the definition of $G_k$ in \eqref{gk}, we find that 
$$ \| u_\varepsilon-U\|_{W_0^{1,\overrightarrow{p}}(\Omega)} \leq 
\| G_k(u_\varepsilon)\|_{W_0^{1,\overrightarrow{p}}(\Omega)}+
\|G_k(U)\|_{W_0^{1,\overrightarrow{p}}(\Omega)} +
\|T_k (u_\varepsilon) -T_k (U)\|_{W_0^{1,\overrightarrow{p}}(\Omega)}.
$$
Then, in view of \eqref{tri} and \eqref{folp}, for every $k\geq 1$, we obtain that 
\begin{equation} \label{gee}  \limsup_{\varepsilon\to 0} \| u_\varepsilon-U\|_{W_0^{1,\overrightarrow{p}}(\Omega)} \leq 
2  \sum_{j=1}^N L_k ^{1/p_j}.
\end{equation} 
Remark that $L_k$ (defined in \eqref{lek}) converges to $0$ as $k\to \infty$ since $G_k(U)\rightharpoonup 0$ (weakly) in $W_0^{1,\overrightarrow{p}}(\Omega)$ and 
$G_k(U)\to 0$ (strongly) in $L^1(\Omega)$ as $k\to \infty$. Hence, by letting $k\to \infty$ in \eqref{gee}, we obtain \eqref{ola}.  
\end{proof}

{\bf Acknowledgements.} The first author has been supported by the Sydney Mathematical Research Institute via the International Visitor Program (August--September 2019) and by Programma di Scambi Internazionali dell'Universit\`a degli Studi di Napoli Federico II. The research of the second author is supported by the 
Australian Research Council under the Discovery Project Scheme (DP190102948).

\appendix
\section{}
\label{auxre}
In this section, we prove some convergence results that have been used in Sections \ref{newso} and \ref{sec6}, respectively. We assume \eqref{IntroEq0} and \eqref{ellip}.

%For every $r>1$, we denote by $r'$ the conjugate exponent of $r$. 
We first recall an anisotropic Sobolev inequality for the case $p<N$, see \cite{T}.

\begin{lemma}
Let $N\geq 2$ be an integer. If \eqref{IntroEq0} holds, 
then there exists a constant $\mathcal{S}=\mathcal{S}(N,\overrightarrow{p})>0$ such that 
\begin{equation*} \label{send} 
\|u\|_{L^{p^\ast}(\R^N)}\leq \mathcal{S}  \prod_{j=1}^N \| \partial_j  u\|_{L^{p_j}(\R^N)}^{1/N} \quad \mbox{for all } u\in C_c^\infty(\R^N).
\end{equation*}
\end{lemma}

\begin{rem} \label{an-sob} {\rm 
	Let $\Omega$ be a bounded, open subset of $\R^N$ ($N\geq 2$). If  \eqref{IntroEq0} holds, then using a  density argument and the arithmetic-geometric mean inequality, we find that
	\begin{equation}\label{ASI}
	\|u\|_{L^{p^\ast}(\Omega)}\leq \mathcal{S}  \prod_{j=1}^N \| \partial_j  u\|_{L^{p_j}(\Omega)}^{1/N} \leq 
	\frac{\mathcal{S}}{N}\|u\|_{W_0^{1,\overrightarrow{p}}(\Omega)}\quad \mbox{for all } u\in W_0^{1,\overrightarrow{p}}(\Omega).
	\end{equation}
	Moreover,  
	by H\"older's inequality, the embedding $W_0^{1,\overrightarrow{p}}(\Omega)\hookrightarrow L^s(\Omega)$ is continuous 
	for every
	$s\in [1,p^\ast]$ and compact for every $s\in [1,p^\ast)$.}
\end{rem} 

\begin{rem} \label{nra} {\rm 
Note that if $\Omega\subset \mathbb R^N$ is an open bounded domain with Lipschitz boundary and \eqref{IntroEq0} holds, then the ``true" critical exponent is $p_\infty$, the maximum between $p^\ast$ and $p_N$. Indeed, Fragal\`a, Gazzola and Kawohl \cite{FGK} showed that the embedding $W_0^{1,\overrightarrow{p}}(\Omega) \hookrightarrow L^r(\Omega)$ is continuous for every $r\in [1,p_\infty]$ and compact if $r\in [1,p_\infty)$.}
\end{rem}

\subsection{Notation} \label{prel}
For $v,w$ and $\{u_\varepsilon\}_{\varepsilon}$ in $W_0^{1,\overrightarrow{p}}(\Omega)$ and for a.e. $x\in \Omega$, we define
\begin{equation} \label{epslo}
\begin{aligned}
& \mathcal D_{u_\varepsilon}(v,w)(x):=\sum_{j=1}^N \left[
A_j(x,u_\varepsilon(x),\nabla v(x))-A_j(x,u_\varepsilon(x),\nabla w(x))
\right] \partial_j (v-w)(x),\\
& H_{u_\varepsilon}(v,w)(x):= \sum_{j=1}^N A_j(x,u_\varepsilon(x),\nabla v(x))\, \partial_j w(x).
\end{aligned}
\end{equation}
Hence, 
$ \mathcal D_{u_\varepsilon}(v,w)=H_{u_\varepsilon}(v,v)-
H_{u_\varepsilon}(v,w)-H_{u_\varepsilon}(w,v)+H_{u_\varepsilon}(w,w). 
$
The monotonicity assumption in \eqref{ellip} gives that 
$\mathcal D_{u_\varepsilon}(v,w)\geq 0$ a.e. in $\Omega,$ whereas the coercivity condition in \eqref{ellip}
yields that 
$H_{u_\varepsilon}(v,v)\geq \nu_0 \sum_{j=1}^N |\partial_j v|^{p_j},
$
where $\nu_0>0$. We thus find that 
\begin{equation} \label{glan} \mathcal D_{u_\varepsilon}(v,w)\geq \nu_0 \sum_{j=1}^N |\partial_j v|^{p_j}-
|H_{u_\varepsilon}(v,w)|-|H_{u_\varepsilon}(w,v)|. 
\end{equation}

\noindent Here, we establish Lemma~\ref{gnsb1}, which is invoked in the proof of Lemma~\ref{lem-tt11}. Further,  we prove Lemma~\ref{joc1}, which is useful in the proof of Theorem~\ref{nth2} (i) in Section~\ref{sec6}. 
To prove Lemmas~\ref{gnsb1} and \ref{joc1}, we adapt an argument from \cite[Lemma 5]{BMP}, the proof of which goes back to Browder \cite{Bro}. 

\noindent As previously often recalled, by Remark~\ref{an-sob}, whenever 
\begin{equation} 
\label{milsj1}
u_\varepsilon \rightharpoonup u  \ \mbox{(weakly) in } W_0^{1,\overrightarrow{p}}(\Omega)\ \mbox{as } \varepsilon\to 0,	
\end{equation}
we can pass to a subsequence (always relabeled $\{u_\varepsilon\}$) such that 
\begin{equation} \label{gym1} u_\varepsilon\to u\ \mbox{ strongly in } L^r(\Omega) \ \mbox{if } r\in [1,p^\ast)\ \mbox{ and } u_\varepsilon\to u\ \mbox{ a.e. in }\Omega.\end{equation}

\subsection{Some convergence results}

\begin{lemma} \label{gnsb1} Let $u$, $\{u_\varepsilon\}_\varepsilon$ be in $W_0^{1,\overrightarrow{p}}(\Omega)$ such that \eqref{milsj1} holds. If  
$\mathcal D_{u_\varepsilon}(u_\varepsilon,u)\to 0
$ a.e. in $ \Omega$ as $ \varepsilon\to 0$, then, up to a subsequence, 
$\nabla u_\varepsilon\to \nabla u\ \mbox{a.e. in }  \Omega\ \mbox{as } \varepsilon\to 0.  $
\end{lemma}

\begin{proof}
Let $Z$ be a subset of $\Omega$ with $\mbox{meas}\,(Z)=0$ such that for every $x\in \Omega\setminus Z$, we have $|u(x)|<\infty$, $|\nabla u(x)|<\infty$,  
$|\eta_j(x)|<\infty$ for all $1\leq j\leq N$, as well as 
\begin{equation} \label{opak1} u_\varepsilon(x)\to u(x), \quad \mathcal{D}_{u_\varepsilon}(u_\varepsilon,u)(x)\to 0\ \mbox{as } \varepsilon\to 0.
\end{equation} 
For every $x\in \Omega\setminus Z$, we claim that   
\begin{equation} \label{bddk1} 
\{|\nabla u_\varepsilon(x)|\}_{\varepsilon}\ \text{ is uniformly bounded with respect to } \varepsilon. 
\end{equation} 
\emph{Proof of \eqref{bddk1}}.
We fix $x\in \Omega\setminus Z$. In view of \eqref{glan}, we have 
\begin{equation} \label{abi0k1}  
\mathcal D_{u_\varepsilon}(u_\varepsilon,u)(x)\geq \nu_0 \sum_{j=1}^N |\partial_j  u_\varepsilon(x)|^{p_j}-
|H_{u_\varepsilon}(u_\varepsilon,u)(x)|
-|H_{u_\varepsilon}(u, u_\varepsilon)(x)|.
\end{equation}    
By Young's inequality, for every $\delta>0$, there exists $C_\delta>0$ such that 
\begin{equation} \label{abi1m1} \begin{aligned}
& |H_{u_\varepsilon}(u_\varepsilon,u)(x)| \leq 
\sum_{j=1}^N \left(\delta\,  |A_j(x,u_\varepsilon,\nabla  u_\varepsilon)|^{p_j'}+C_\delta  |\partial_j u(x)|^{p_j}\right),\\
& |H_{u_\varepsilon}(u, u_\varepsilon)(x)| \leq 
\sum_{j=1}^N \left(\delta\,  |\partial_j u_\varepsilon(x)|^{p_j} +C_\delta |A_j(x,u_\varepsilon,\nabla u)|^{p_j'}\right).
\end{aligned}
\end{equation}
We use the growth condition  in \eqref{ellip} to bound from above the right-hand side of each inequality in \eqref{abi1m1}. Then, from \eqref{abi0k1}, there exist positive constants $C$ and $\widehat{C_\delta}$, both independent of $\varepsilon$ (with $\widehat{C_\delta}$ depending on $\delta$), such that 
\begin{equation} \label{abi2k1}   \mathcal D_{u_\varepsilon}(u_\varepsilon,u)(x)\geq (\nu_0-C\,\delta) \sum_{j=1}^N |\partial_j u_\varepsilon(x)|^{p_j}- \widehat{C_\delta}\, \mathfrak{g}_{u_\varepsilon}(u)(x),
\end{equation}
where  
$
\mathfrak{g}_{u_\varepsilon}(u)(x)=
\sum_{j=1}^N  \eta_j^{p_j'}(x)+|u_\varepsilon(x)|^{p^\ast} +
\sum_{j=1}^N |\partial_j u(x)|^{p_j}.
$
Using \eqref{opak1} and choosing $\delta\in (0,\nu_0/C)$, from \eqref{abi2k1} 
we conclude \eqref{bddk1}. 

\medskip
\noindent \emph{Proof of Lemma \ref{gnsb1} concluded.} 
Let $x\in \Omega\setminus Z$ be arbitrary. Define 
$\xi_\varepsilon=\nabla u_\varepsilon(x)\ \mbox{ and } \xi=\nabla  u(x).$ 
To show that $\xi_\varepsilon\to \xi$ as $\varepsilon\to 0$, it is enough to prove that any accumulation point of $\xi_\varepsilon$, say
$\xi^*$, coincides with $\xi$. From \eqref{bddk1}, we have $|\xi^*|<\infty$. 
By \eqref{opak1} and the continuity of $ A_j(x,\cdot,\cdot)$ with respect to the last two variables, we find that 
$$ {\mathcal D}_{u_\varepsilon}(u_\varepsilon,u)(x)\to  \sum_{j=1}^N \left[A_j(x,u(x),\xi^*)-  A_j(x,u(x),\xi)\right] (\xi^*_j- \xi_j)\quad \text{as } \varepsilon\to 0.   
$$ 
This, jointly with \eqref{opak1} and the monotonicity condition in \eqref{ellip}, gives that 
$\xi^*=\xi$. This ends the proof since $x\in \Omega\setminus Z$ is arbitrary and $\mbox{meas}\,(Z)=0$.  
\end{proof}

\begin{lemma} \label{joc1} Let $k\geq 1$ be a fixed integer. 
Let $u$, $\{u_\varepsilon\}_\varepsilon$ be in $W_0^{1,\overrightarrow{p}}(\Omega)$ such that \eqref{milsj1} holds. 
Suppose that, up to a subsequence of $\{u_\varepsilon\}$ (depending on $k$ and relabeled $\{u_\varepsilon\}$) 
\begin{equation} \label{amil1} \mathcal D_{u_\varepsilon}(T_k(u_\varepsilon),T_k(u))\to 0
\quad \mbox{ in } L^1(\Omega)\ \mbox{ as } \varepsilon\to 0.
\end{equation}  
Then, up to a subsequence of $\{u_\varepsilon\}$, as $\varepsilon\to 0$, we have 
\begin{eqnarray}
& \nabla T_k (u_\varepsilon)\to \nabla T_k (u)\ \mbox{a.e. in } \Omega, \label{hsn11}\\
&  T_k (u_\varepsilon)\to T_k (u)\ \mbox{(strongly) in } W_0^{1,\overrightarrow{p}}(\Omega).\label{hsn21}
\end{eqnarray}
\end{lemma}

\begin{proof} 
By \eqref{milsj1} and \eqref{amil1}, up to a subsequence of $\{u_\varepsilon\}$, we have \eqref{gym1}, as well as 
$\mathcal D_{u_\varepsilon}(T_k(u_\varepsilon),T_k(u))\to 0$ a.e. in $\Omega$ as  $\varepsilon\to 0$. 
Let $Z$ be a subset of $\Omega$ as in the proof of Lemma~\ref{gnsb1}, where 
$\mathcal D_{u_\varepsilon}(T_k(u_\varepsilon),T_k(u)))$ replaces $\mathcal D_{u_\varepsilon}(u_\varepsilon,u)$. We follow the same argument as in Lemma~\ref{gnsb1} with the obvious modifications suggested by the above replacement. Then, for every $x\in \Omega\setminus Z$, we obtain  
\begin{equation} \label{abi01} 
\begin{aligned} 
\mathcal D_{u_\varepsilon}(T_k(u_\varepsilon),T_k(u))(x)\geq  
& \nu_0 \sum_{j=1}^N |\partial_j  T_k(u_\varepsilon)(x)|^{p_j}\\
&
-	|H_{u_\varepsilon}(T_k(u_\varepsilon),T_k(u))(x)|
-|H_{u_\varepsilon}(T_k(u), T_k(u_\varepsilon))(x)|.
\end{aligned}
\end{equation} 
This leads to 
$ \{|\nabla T_k (u_\varepsilon)(x)|\}_{\varepsilon}$ being uniformly bounded with respect to $ \varepsilon$ and we also obtain \eqref{hsn11}.

\vspace{0.2cm}
We conclude the proof of Lemma~\ref{joc1} by showing \eqref{hsn21}. 
From \eqref{hsn11}, we see that 
$\{|\partial_j T_k (u_\varepsilon)-  \partial_j T_k (u)|^{p_j}\}_\varepsilon$ is a sequence of nonnegative integrable functions, converging to $0$ a.e. on $\Omega$. Thus, by Vitali's Theorem, we obtain that $\partial_j T_k (u_\varepsilon)\to \partial_j T_k (u)$ in $L^{p_j}(\Omega)$ as $\varepsilon\to 0$ for every $1\leq j\leq N$ by proving that   	
\begin{equation} \label{unif1} \left\{\sum_{j=1}^N |\partial_j T_k (u_\varepsilon)|^{p_j}\right\}_{\varepsilon}\ \mbox{ is uniformly integrable over } \Omega. \end{equation} 
The claim of \eqref{unif1} follows 
from \eqref{amil1} and \eqref{abi01} whenever $ \{H_{u_\varepsilon}( T_k(u_\varepsilon),T_k(u))\}_\varepsilon $ and 
$\{H_{u_\varepsilon}(T_k(u), T_k(u_\varepsilon))\}_\varepsilon$ converge in $L^1(\Omega)$ as $\varepsilon\to 0$. 	We next establish that 
\begin{equation} \label{coni11} 
\begin{aligned}
H_{u_\varepsilon}(T_k(u_\varepsilon),T_k(u))& \to \sum_{j=1}^N A_j(x,u,\nabla T_k (u))\,\partial_j T_k(u)\quad \mbox{in } L^1(\Omega)\ \mbox{as } \varepsilon\to 0,
\\
H_{u_\varepsilon}(T_k(u),T_k(u_\varepsilon)) &\to \sum_{j=1}^N A_j(x,u,\nabla T_k (u))\,\partial_j T_k(u)\quad \mbox{in } L^1(\Omega)\ \mbox{as } \varepsilon\to 0.  
\end{aligned}
\end{equation}

\medskip
\noindent 	{\em Proof of \eqref{coni11}.} 
Let $1\leq j\leq N$ be arbitrary. We see that $\{A_j(x,u_\varepsilon,\nabla T_k (u_\varepsilon))\}_\varepsilon$ is bounded in $L^{p_j'}(\Omega)$ from the growth condition in \eqref{ellip} and the boundedness of $\{u_\varepsilon\}_{\varepsilon}$ in $W_0^{1,\overrightarrow{p}}(\Omega)$ and, hence, in $L^{p^\ast}(\Omega)$. 
Moreover, $A_j(x,u_\varepsilon,\nabla T_k (u_\varepsilon))\to A_j(x,u,\nabla T_k (u))$ a.e. in $ \Omega$ as $\varepsilon\to 0$ using \eqref{hsn11}, the convergence $u_\varepsilon\to u$ a.e. in $\Omega$ (from \eqref{gym1}) and the continuity of $A_j(x,\cdot,\cdot)$ in the last two variables. Thus, up to a subsequence of $\{u_\varepsilon\}$, we infer that $ A_j(x,u_\varepsilon,\nabla T_k (u_\varepsilon))
\rightharpoonup A_j(x,u,\nabla T_k (u))$ (weakly) in $L^{p_j'}(\Omega)$ as $\varepsilon\to 0.$
This proves the first convergence in \eqref{coni11}.  We now prove the second one.

Using \eqref{hsn11} and the continuity properties of $A_j$, as $\varepsilon\to 0$,
\begin{equation} \label{vit0} A_j(x,u_\varepsilon,\nabla T_k (u))\,\partial_j T_k (u_\varepsilon)\to 
A_j(x,u,\nabla T_k (u))\,\partial_j T_k (u)\ \mbox{a.e. in }  \Omega \end{equation}
for each $1\leq j\le N$.
Observe that $\{ \chi_{\{|u_\varepsilon|<k\} } |A_j(x,u_\varepsilon,\nabla T_k (u))|^{p_j'}\}_\varepsilon 
$ is uniformly integrable over $\Omega$ (from the growth condition of $A_j$ in  \eqref{ellip}) and 
$\partial_j T_k (u_\varepsilon)= \chi_{\{|u_\varepsilon|<k\} } \,\partial_j u_\varepsilon.$ 
Thus, since 
$\{\partial_j u_\varepsilon\}_\varepsilon$ is bounded in $L^{p_j}(\Omega)$, it follows from H\"older's inequality that 
$\{ A_j(x,u_\varepsilon,\nabla T_k (u))\,\partial_j T_k (u_\varepsilon)\}_\varepsilon
$ is uniformly integrable over $\Omega$
for each $1\leq j\leq N$. From \eqref{vit0} and  Vitali's Theorem, we reach the claim of \eqref{coni11}.   
\end{proof}

From Lemma~\ref{joc1} and a standard diagonal argument, we  obtain the following. 

\begin{cor}
Let \eqref{milsj1} and \eqref{amil1} hold. 
Then, there exists a subsequence of $\{u_\varepsilon\}_\varepsilon$, 
relabeled $\{u_\varepsilon\}_\varepsilon$, such that 
$  \nabla u_\varepsilon\to \nabla u$ a.e. in $\Omega$ and 
$T_k (u_\varepsilon)\to T_k (u)$ (strongly) in  $W_0^{1,\overrightarrow{p}}(\Omega)$ as $ \varepsilon\to 0
$ for every integer $k\geq 1$. 
\end{cor}

\medskip

\end{document}